\documentclass[11pt]{amsart}
\usepackage{amssymb, latexsym, mathrsfs, color, tikz}

\usepackage[colorlinks=true, pdfstartview=FitV,
 linkcolor=blue,citecolor=blue,urlcolor=blue]{hyperref}

\setlength{\textwidth}{460pt} \setlength{\hoffset}{-45pt}

\numberwithin{equation}{section}
\theoremstyle{plain}
\newtheorem{Thm}[equation]{Theorem}
\newtheorem{Prop}[equation]{Proposition}
\newtheorem{Cor}[equation]{Corollary}
\newtheorem{Lem}[equation]{Lemma}

\theoremstyle{definition}
\newtheorem{Def}[equation]{Definition}
\newtheorem{Exa}[equation]{Example}
\newtheorem{Rmk}[equation]{Remark}

\newcommand{\blue}{\textcolor{blue}}

\begin{document}

\title [Root Multiplicities of Rank $2$ hyperbolic Kac-Moody algebras]
{A Combinatorial Approach to Root Multiplicities \\ of Rank $2$ hyperbolic Kac-Moody algebras}
\author[S.-J. Kang]{Seok-Jin Kang$^{\star}$}
\thanks{$^{\star}$This work was supported by NRF Grant \# 2014-021261 and by NRF Grant \# 2010-0010753}
\address{Department of Mathematical Sciences and Research Institute of Mathematics,
Seoul National University, Seoul 151-747, South Korea}
\email{sjkang@snu.ac.kr}

\author[K.-H. Lee]{Kyu-Hwan Lee$^{\diamond}$}
\thanks{$^{\diamond}$This work was partially supported by a grant from the Simons Foundation (\#318706).}
\address{Department of
Mathematics, University of Connecticut, Storrs, CT 06269, U.S.A.}
\email{khlee@math.uconn.edu}

\author[K. Lee]{Kyungyong Lee$^{\dagger}$}
\thanks{$^{\dagger}$This work was partially supported by NSA grant H98230-14-1-0323}
\address{Department of
Mathematics, Wayne State University, Detroit, MI 48202, U.S.A., and Center for Mathematical Challenges, KIAS, Seoul 130-722, South Korea}
\email{klee@math.wayne.edu}

\subjclass[2010]{Primary 17B67, 17B22; Secondary 05E15}
\begin{abstract}
In this paper we study root multiplicities of rank $2$ hyperbolic Kac-Moody algebras using the combinatorics of Dyck paths.
\end{abstract}

\maketitle

\section{Introduction}

This paper takes a new approach to the study of root multiplicities for hyperbolic Kac-Moody algebras. Even though the root multiplicities are fundamental data in understanding the structures of Kac-Moody algebras, we have not seen much progress in this topic for the last twenty years. The method taken in this paper is totally new, though depending on the previous developments, and opens different perspectives that can bring new results on root multiplicities and make advancements, for example, toward {\em Frenkel's conjecture}.
To begin with, let us first explain the backgrounds of the problem considered in this paper.

After introduced by Kac and Moody more than four decades ago, the Kac-Moody theory has become a standard generalization of the classical Lie theory. However, it makes one surprised to notice that little is known beyond the affine case. Even in the hyperbolic case, our knowledge is very limited in comparison with the affine case.

The first difficulty in the hyperbolic case and other indefinite cases stems from wild behaviors of root multiplicities. To be precise,
let $\mathfrak{g}$ be a  Kac-Moody algebra with Cartan subalgebra $\mathfrak{h}$. For a root $\alpha$, the root space $\mathfrak{g}_{\alpha}$ is given by
$$\mathfrak{g}_{\alpha}\ =\ \{x\in \mathfrak{g}\mid[h,x]=\alpha(h)x \text{ for all } h\in \mathfrak{h}\}.$$ Then we have the root space decomposition
$$\mathfrak{g}=\bigoplus_{\alpha\in\Delta^+}\mathfrak{g}_{\alpha}\ \oplus\ \mathfrak{h}\ \oplus\ \bigoplus_{\alpha\in\Delta^-}\mathfrak{g}_{\alpha},$$
which is a decomposition of $\mathfrak g$ into finite dimensional subspaces, where $\Delta^+$ (resp. $\Delta^-$) is the set of positive (resp. negative) roots.
The dimension of the root space $\mathfrak{g}_{\alpha}$ is called the {\it multiplicity} of $\alpha$.
Obviously, root multiplicities are fundamental data to understand the structure of a Kac-Moody algebra $\mathfrak g$. However, the status of our knowledge shows a dichotomy according to types of $\mathfrak g$.

Recall that the Weyl group $W$ of $\mathfrak g$ acts on the set $\Delta$ of all roots, preserving root multiplicities.
If $\alpha$ is a real root, $\alpha$ has an expression $\alpha=w\alpha_i$ for $w \in W$ where $\alpha_i$ is a simple root. It follows that $\dim(\mathfrak{g}_{\alpha})=1$. Since all roots in finite dimensional Lie algebras are real, all root spaces in finite dimensional Lie algebras are $1$ dimensional.
 Let $\mathfrak{g}$ be an  untwisted affine Kac-Moody algebra of rank $\ell+1$. Then the multiplicity of every imaginary root of  $\mathfrak{g}$ is $\ell$ (\cite[Corollary 7.4]{Kac}). There is a similar formula for twisted affine Kac-Moody algebras as well (\cite[Corollary 8.3]{Kac}).

For hyperbolic and more general indefinite Kac-Moody algebras, the situation is vastly different, due to the exponential growth of the imaginary root spaces.
Our knowledge of the dimensions of imaginary root spaces is far from being complete, though there are known formulas for root multiplicities.

The first formulas for root multiplicities of Kac--Moody algebras are a closed form formula by Berman and Moody (\cite{BM}) and a recursive formula by Peterson (\cite{P}).
Both formulas are based on the {\it denominator identity} for a Kac--Moody algebra $\mathfrak{g}$ and enable us to calculate the multiplicity of a given root (of a reasonable height). Computations of root multiplicities of hyperbolic Kac--Moody algebras began with the paper by Feingold and Frenkel \cite{FF}, where the hyperbolic Kac--Moody algebra $\mathfrak F$ of type $HA_1^{(1)}$ was considered. Using the same method,  Kac, Moody and Wakimoto \cite{KMW} calculated some root multiplicities for $HE_8^{(1)}(=E_{10})$.

These methods were further systematically developed and generalized by the first author \cite{Ka1, Ka2} for arbitrary Kac--Moody algebras and has been adopted in many works on roots multiplicities of indefinite Kac--Moody algebras. In his construction, the first author adopted homological techniques and Kostant's formula (\cite{GL}) to devise a method that works for higher levels. For example, he applied his method to compute roots multiplicities of the algebra $\mathfrak F$ of type $HA_1^{(1)}$ up to level $5$ (\cite{Ka3, Ka4}).

Despite all these results, we still do not have any unified, efficient approach to computing all root multiplicities. Essentially these methods give answers to root multiplicities one at a time, with no general formulas or effective bounds on multiplicities. In particular, these formulas are given by certain {\em alternating} sums of {\em rational} numbers and make it difficult to control overall behavior of root multiplicities. Therefore it is already quite hard to find effective upper or lower bounds for  root multiplicities for hyperbolic and other indefinite Kac-Moody algebras.

For hyperbolic Kac--Moody algebras, in the setting of the `no-ghost' theorem from String theory, I.  Frenkel \cite{F} proposed a bound on the root multiplicities of hyperbolic Kac--Moody algebras.

{\bf Frenkel's conjecture:} {\it Let ${\mathfrak {g}}$ be a symmetric hyperbolic Kac--Moody algebra associated to a hyperbolic lattice of dimension $d$ and equipped with invariant form $(\cdot\mid\cdot)$ such that $(\alpha_i | \alpha_i)=2$ for simple roots $\alpha_i$. Then we have:
$$\dim({\mathfrak {g}}_{\alpha})\ \leq\  p^{(d-2)} \left(1-\frac{(\alpha |\alpha)}{2}\right),$$
where the function $p^{(\ell)}(n)$  is the multi-partition function with $\ell$ colors. }

Frenkel's conjecture is known to be true for any symmetric Kac-Moody
algebra associated to a hyperbolic lattice of dimension $26$ \cite{F}, though Kac, Moody and Wakimoto
\cite{KMW} showed that the conjecture fails for $E_{10}$. The conjecture
is still open for the rank $3$ hyperbolic Kac--Moody algebra
$\mathfrak F$ and proposes arguably the most tantalizing question
about root multiplicities.

\medskip
{\bf Open Problem:}
{\it Prove Frenkel's conjecture for the rank $3$ hyperbolic Kac-Moody algebra $\mathfrak F$.}
\medskip

As mentioned earlier, Feingold and Frenkel \cite{FF} and the first author \cite{Ka3, Ka4} studied root multiplicities of $\mathfrak F$. 
There is another approach to root multiplicities of $\mathfrak F$ and other hyperbolic Kac-Moody algebras,  taken by Niemann \cite{Nie}, which follows  Borcherds' idea in construction of the fake Monster Lie algebra \cite{Bor}. This approach was further pursued by Kim and the second author \cite{KL}. A recent survey on root multiplicities can be found in \cite{CFL}.

In this paper, we adopt quite a different methodology  and investigate root multiplicities of  rank two symmetric hyperbolic Kac-Moody algebras $\mathcal H(a)$ $(a \ge 3)$ through combinatorial objects. More precisely, we use lattice paths, known as {\em Dyck paths}, to describe root multiplicities.

Suppose that  $\alpha= r\alpha_1 + s \alpha_2$ is an imaginary root of $\mathcal H(a)$ with $r$ and $s$ relatively prime, for simplicity. Then our first main theorem (Theorem \ref{thm-ff}) shows that

\begin{Thm}\label{coprime-thm}We have
\[ \mathrm{mult}\, (\alpha) = \sum_{\substack{ D: \, \text{\rm Dyck path} \\ \mathrm{wt} (D)=\alpha} } c(D). \]
\end{Thm}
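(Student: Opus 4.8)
The plan is to obtain the identity from the \emph{denominator identity} of $\mathcal{H}(a)$ (equivalently, from Peterson's recursion, or from the first author's homological computation of the same Euler characteristic) and then to carry out a combinatorial reduction collapsing the resulting alternating rational sum onto the Dyck-path side.

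First I would record the arithmetic. Let $Q_+ = \mathbb{Z}_{\ge 0}\alpha_1 + \mathbb{Z}_{\ge 0}\alpha_2$, let $W = \langle s_1, s_2\rangle$ be the infinite dihedral Weyl group, and normalize the invariant form by $(\alpha_i\mid\alpha_i) = 2$, $(\alpha_1\mid\alpha_2) = -a$, so that $(\beta\mid\beta) = 2(r^2 - ars + s^2)$ and $(\beta\mid\rho) = r+s$ for $\beta = r\alpha_1 + s\alpha_2$. Because $\gcd(r,s) = 1$, the element $\alpha/k$ lies outside $Q$ for every $k \ge 2$; hence the quantity $c_\beta := \sum_{k\ge 1}\tfrac1k\,\mathrm{mult}(\beta/k)$ satisfies $c_\alpha = \mathrm{mult}(\alpha)$, so that the statement is insensitive to the genuinely rational corrections from proper divisors, and it suffices to prove $c_\alpha = \sum_{\mathrm{wt}(D)=\alpha} c(D)$.

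Next I would bring in the standard machinery. Peterson's recursion determines all $c_\beta$ ($\beta\in Q_+$) from $c_{\alpha_1} = c_{\alpha_2} = 1$ through the quadratic relation
\[ (\beta\mid\beta - 2\rho)\,c_\beta \;=\; \sum_{\substack{\beta'+\beta''=\beta\\ \beta',\beta''\in Q_+\setminus\{0\}}} (\beta'\mid\beta'')\,c_{\beta'}\,c_{\beta''}, \]
where $(\beta\mid\beta - 2\rho) = 2(r^2 - ars + s^2 - r - s) < 0$ whenever $\beta$ is imaginary; equivalently one differentiates the denominator identity $\prod_{\beta\in\Delta^+}(1 - e^{-\beta})^{\mathrm{mult}\,\beta} = \sum_{w\in W}(-1)^{\ell(w)}e^{w\rho-\rho}$, whose right side is completely explicit in rank $2$, and since the positive real roots all have multiplicity $1$ their factors can be absorbed, leaving an identity purely among the imaginary multiplicities. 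Setting $d_\alpha := \sum_{\mathrm{wt}(D)=\alpha} c(D)$, I would then prove $d_\alpha = c_\alpha$ by showing that $d_\alpha$ obeys the same recursion with the same initial data: cutting a Dyck path of weight $\beta$ into two sub-paths should account for the quadratic terms $c_{\beta'}c_{\beta''}$, and the integer statistics of the cut (numbers of $\alpha_1$- and $\alpha_2$-steps, of corners, and so on) should produce the form values $(\beta'\mid\beta'')$ and $(\beta\mid\beta-2\rho)$, the negative contributions being realized by a sign-reversing involution on marked pairs of paths. A possibly cleaner variant is to realize $\mathfrak{n}_+(\mathcal{H}(a))$ as the quotient of the free Lie algebra on $e_1, e_2$ by the Serre relations and to exhibit the Dyck paths of weight $\alpha$, weighted by $c(D)$, as indexing a basis of $\mathfrak{n}_+(\mathcal{H}(a))_\alpha$.

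I expect the main obstacle to be exactly this last step — turning the alternating rational recursion into a cancellation-free statement about Dyck paths. The natural order of operations seems to be: (i) dispose of the real-root part exactly, using that consecutive real roots are related by the simple reflections so that $\prod_{\beta\in\Delta^+_{\mathrm{re}}}(1-e^{-\beta})^{-1}$ together with the infinite dihedral Weyl sum combines into an explicit, manageable series, leaving only the imaginary cone; (ii) organize the Dyck-path recursion so that each step peels off one real-root direction, mirroring a simple reflection acting on $\alpha$; and (iii) establish the requisite nonnegativity — in particular that $c(D)\ge 0$, so that no cancellation is needed on the path side — from an explicit description of the configurations counted by $c(D)$. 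Steps (ii) and (iii), and the exact matching of the form values $(\beta'\mid\beta'')$ with a path statistic, are where the real work lies; once the recursion is matched, coprimality finishes the proof via $c_\alpha = \mathrm{mult}(\alpha)$.
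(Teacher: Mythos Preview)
Your plan has two concrete gaps that would prevent it from going through as stated.

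First, the assertion in step~(iii) that $c(D)\ge 0$ is false: by Lemma~\ref{lem-c} one has $c(D)\in\{-1,0,1\}$, and paths with an odd number of framed subpaths of type~$(-1)$ genuinely contribute $-1$. So the Dyck-path side is not cancellation-free, and any argument that relies on ``no cancellation is needed on the path side'' cannot succeed.

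Second, and more seriously, the inductive scheme via Peterson's recursion is circular for the coprime statement. The recursion for $c_\alpha$ involves \emph{all} decompositions $\alpha=\beta'+\beta''$, and the pieces $\beta',\beta''$ are almost never coprime even when $\alpha$ is. To run the induction you would therefore need $d_\beta=c_\beta$ for every $\beta\in Q_+$, not just for coprime $\beta$; but for non-coprime $\beta$ the naive Dyck-path sum does \emph{not} equal $c_\beta$ (this is exactly why Theorem~\ref{thm-ff} is stated in terms of essential equivalence classes with a correction term). So verifying Peterson on the path side presupposes the general result, which is strictly harder than the coprime case you are trying to prove. The promised matching of $(\beta'\mid\beta'')$ with a path statistic is also left entirely unspecified.

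The paper's route is much shorter and avoids these issues. It does not re-derive the multiplicities from scratch but starts from the closed Kang--Melville formula (Proposition~\ref{KM}), which already packages the denominator-identity computation as
\[
\mathrm{mult}(\alpha)=\sum_{\mathbf c\in\mathscr C(\alpha)}(-1)^{\sum_{i\ \mathrm{odd}}(c_i^0+c_i^1)}\,\frac{(\sum_i(c_i^0+c_i^1)-1)!}{\prod_i c_i^0!\,c_i^1!}
\]
in the coprime case. The single combinatorial step is then a Cycle-Lemma argument: for $\gcd(r,s)=1$, each cyclic class of concatenations of the elementary paths $L_{A_{i+1}\times A_i}$, $L_{A_i\times A_{i+1}}$ contains exactly one Dyck path, and the number of concatenations with prescribed multiplicities $\mathbf c$ is precisely the multinomial $\frac{(\sum-1)!}{\prod!}$. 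Summing $\mathrm{sgn}(\mathbf s)$ over all concatenations and regrouping by the resulting Dyck path gives $\sum_D c(D)$ directly. No recursion, no matching of bilinear forms, and the signs are absorbed into the definition of $c(D)$ rather than eliminated.
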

Here $c(D)$ has values $1$, $0$ or $-1$ and is immediately determined by the shape of the Dyck path $D$. The result for general $\alpha$ involves considering cyclic equivalence of paths and a minor correction term coming from paths with weight $\alpha/2$. An important feature is that  this formula only contains integers and has clear combinatorial interpretation, and makes it possible to prove properties of root multiplicities through combinatorial manipulations of Dyck paths. 
For example, in the symmetric rank two case, we can prove an analogue of Frenkel's conjecture through combinatorics of Dyck paths.

\begin{Prop}
Let $\mathfrak {g}=\mathcal H(a)$. Then we have:
$$\mathrm{mult}\, (\alpha) \leq\  p_t \left(1-\frac{(\alpha |\alpha)}{2}\right),$$
where $\alpha=r \alpha_1 + s\alpha_2$, $t=\max (r,s)$ and  $p_t(n)$  is the number of partitions of $n$ with at most $t$ parts.
\end{Prop}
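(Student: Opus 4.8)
The plan is to read the bound off the combinatorial formula of Theorem~\ref{coprime-thm}. Assume first that $\gcd(r,s)=1$. Since every Dyck path $D$ has $c(D)\in\{-1,0,1\}$, Theorem~\ref{coprime-thm} gives
$$\mathrm{mult}\,(\alpha)\ =\ \sum_{\mathrm{wt}(D)=\alpha} c(D)\ \le\ \#\{D:\mathrm{wt}(D)=\alpha,\ c(D)=1\}\ \le\ \#\{D:\mathrm{wt}(D)=\alpha\}.$$
Using $(\alpha_i|\alpha_i)=2$ and $(\alpha_1|\alpha_2)=-a$ one computes $n:=1-\tfrac{(\alpha|\alpha)}{2}=1-r^{2}-s^{2}+ars$, which is $\ge 1$ because $\alpha$ is imaginary, so the statement reduces to the purely combinatorial estimate $\#\{D:\mathrm{wt}(D)=\alpha\}\le p_t(n)$ with $t=\max(r,s)$. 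For general $\alpha$ the formula replaces this count by a sum over cyclic equivalence classes plus the stated correction coming from weight $\alpha/2$; passing to cyclic classes only decreases the count, and the correction is governed by the analogous quantity for $\alpha/2$, which is strictly smaller since $(\alpha/2|\alpha/2)=(\alpha|\alpha)/4$ and $\max(r/2,s/2)=t/2$, so it is absorbed into $p_t(n)$. I expect this reduction to the coprime-type count to be routine.

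The core step is to produce an injection $\Phi$ from $\{D:\mathrm{wt}(D)=\alpha\}$ into the set of partitions of $n$ with at most $t$ parts. A Dyck path of weight $\alpha$ is built from $r$ ``$\alpha_1$-steps'' and $s$ ``$\alpha_2$-steps'' and lies inside the Dyck region whose upper boundary is constructed from the data $a$; it is therefore recorded by a weakly decreasing integer sequence --- the lengths of its successive maximal runs of one fixed step type (equivalently, its column heights). Subtracting from this sequence the corresponding heights of the bounding staircase yields the partition $\lambda=\Phi(D)$. Two things must be checked: (i) $\lambda$ has at most $t$ parts --- immediate, since the path is confined, in the relevant coordinate, to a range of size $\min(r,s)\le t$ in which one reads off the runs; and (ii) $|\lambda|=n$, i.e. the associated area statistic, measured against the bounding curve, is identically $1-r^{2}-s^{2}+ars$ on all Dyck paths of weight $\alpha$. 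Distinctness of $\Phi$ is clear, since the run-length sequence determines the path; hence $\mathrm{mult}\,(\alpha)\le\#\Phi(\{D:\mathrm{wt}(D)=\alpha\})\le p_t(n)$.

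I expect step (ii) --- the identity that the area cut off by every weight-$\alpha$ Dyck path equals $1-\tfrac{(\alpha|\alpha)}{2}$ --- to be the main obstacle, as it is the single point at which the Lie-theoretic invariant $(\alpha|\alpha)$ must emerge from the combinatorial geometry, forcing the precise shape of the Dyck region's upper boundary to be matched against the quadratic form of $\mathcal H(a)$. A fallback, should the naive area not be constant, is to inject only the paths with $c(D)=1$ (which is all Theorem~\ref{coprime-thm} needs) and to use as target the complementary region, of total size $n$, reading off $\Phi(D)$ from the part of that region lying on one fixed side of $D$; the same two checks, (i) and (ii), then finish the argument, after which the handling of the non-coprime correction described above completes the proof.
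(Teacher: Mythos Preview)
Your plan has a genuine gap: the injection you are looking for does not exist in general, and your step~(ii) is not merely ``the main obstacle'' but is actually false. The total area cut off by the Dyck region is a purely combinatorial quantity in $r$ and $s$ alone, whereas $1-\tfrac{(\alpha|\alpha)}{2}=1+ars-r^{2}-s^{2}$ depends on $a$; they cannot be equal identically. More decisively, the cruder inequality
\[
\#\{D:\mathrm{wt}(D)=\alpha\}\ \le\ p_{t}\!\left(1-\tfrac{(\alpha|\alpha)}{2}\right)
\]
already fails. Take $a=3$ and $\alpha=2\alpha_{1}+5\alpha_{2}$: then $1-\tfrac{(\alpha|\alpha)}{2}=2$, $t=5$, and $p_{5}(2)=2$, while there are three Dyck paths of size $2\times 5$ (namely $1122222$, $1212222$, $1221222$). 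So no injection from all Dyck paths of weight $\alpha$ into partitions of $1-\tfrac{(\alpha|\alpha)}{2}$ with at most $t$ parts can exist, and your fallback, which still asks for the same target set, does not repair this.

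What you are missing is Weyl invariance. Both $\mathrm{mult}(\alpha)$ and $(\alpha|\alpha)$ are $W$-invariant, but the number of Dyck paths of weight $\alpha$ is not. The paper exploits this: after assuming $r\le s$ by symmetry, it sets $n=r-1+(\text{number of unit boxes below the diagonal})$, a number depending only on $(r,s)$, and injects the Dyck paths into partitions of this $n$ with at most $s$ parts via $D\mapsto(\gamma_{0},\gamma_{1},\dots,\gamma_{s-1})$, where $\gamma_{k}$ counts boxes in the $k$-th row below $D$ and $\gamma_{0}:=n-\sum_{k\ge 1}\gamma_{k}$. The Lie-theoretic input enters only at the very end: one uses $W$-invariance of $\mathrm{mult}$ to assume $r\le s\le \tfrac{a}{2}r$, and in that fundamental domain one checks the \emph{inequality} $n\le 1-\tfrac{(\alpha|\alpha)}{2}$ by the elementary chain
\[
1+ars-r^{2}-s^{2}\ \ge\ 1+\tfrac{a^{2}-4}{2a}\,rs\ \ge\ 1+\tfrac{5}{6}\,rs\ \ge\ r-1+\tfrac{1}{2}\,rs\ \ge\ n.
\]
Thus the ``matching of $(\alpha|\alpha)$ with the combinatorial geometry'' that you anticipated is not an identity at all, but an inequality valid only after restricting to a fundamental chamber; without that reduction the argument cannot go through.
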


Even though this upper bound is in the form of Frenkel's conjecture, it is actually crude. More interestingly, Theorem~\ref{coprime-thm} gives a natural upper bound by only counting paths with $c(D)=1$. This upper bound can be significantly improved by considering cancellation with paths having $c(D)=-1$. Namely, we consider a function $\Phi$ from $\{D :\,c( D)=-1 \}$ to $\{D :\,c( D)=1\}$. 
Suppose that  $\alpha= r\alpha_1 + s \alpha_2$ is an imaginary root of $\mathcal H(a)$ with $r$ and $s$ relatively prime, for simplicity. Then we obtain

\begin{Thm} \[   \mathrm{mult}\, (\alpha) \le \# \{ D :   \, \text{\rm Dyck path}, \,  \mathrm{wt} ( D)=\alpha, \,  c( D)=1, \, D \text{\rm \  is not an image under }\Phi \} .\]
\end{Thm}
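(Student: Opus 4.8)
The plan is to deduce this theorem directly from Theorem~\ref{coprime-thm} by exploiting the sign structure of the summands. Since $\mathrm{mult}\,(\alpha) = \sum_{\mathrm{wt}(D)=\alpha} c(D)$ and each $c(D) \in \{-1,0,1\}$, we may discard the paths with $c(D)=0$ and write $\mathrm{mult}\,(\alpha) = \#\{D : \mathrm{wt}(D)=\alpha,\, c(D)=1\} - \#\{D : \mathrm{wt}(D)=\alpha,\, c(D)=-1\}$. The content of the theorem is therefore entirely contained in the assertion that the map $\Phi$ is a well-defined \emph{injection} from $\{D : \mathrm{wt}(D)=\alpha,\, c(D)=-1\}$ into $\{D : \mathrm{wt}(D)=\alpha,\, c(D)=1\}$: once injectivity is known, $\#\{D : c(D)=-1\} \le \#\{D : c(D)=1, D \in \mathrm{im}\,\Phi\}$, hence
\[
\mathrm{mult}\,(\alpha) \;\le\; \#\{D : c(D)=1\} - \#\{D \in \mathrm{im}\,\Phi\} \;=\; \#\{D : c(D)=1,\ D \notin \mathrm{im}\,\Phi\},
\]
which is the claimed bound.

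First I would record the precise combinatorial definition of $\Phi$: given a Dyck path $D$ with $c(D)=-1$, one performs a local surgery on the shape of $D$ (for instance, excising or reflecting a minimal "bad" sub-segment responsible for the $-1$ sign, or inserting/deleting a box at a canonically determined corner) so that the resulting path $\Phi(D)$ still has the same weight $\alpha$ but now has $c(\Phi(D))=1$. I would then verify the three required properties in order: (i) $\mathrm{wt}(\Phi(D)) = \mathrm{wt}(D) = \alpha$, which should follow because the surgery redistributes steps without changing the total numbers of horizontal and vertical steps; (ii) $c(\Phi(D))=1$, which amounts to checking that the surgery transforms the local configuration that forced $c(D)=-1$ into one of the configurations that forces $c=1$ — this is a finite case analysis on the shapes near the modified region; and (iii) injectivity of $\Phi$, for which I would construct a partial inverse by showing that the modified region in $\Phi(D)$ can be recognized and the surgery undone unambiguously.

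The main obstacle I anticipate is step (iii), injectivity. The danger is that two different paths $D_1 \ne D_2$ with $c=-1$ get sent to the same path $D'$ with $c=1$; to rule this out one must show that from the shape of $D'$ alone one can identify \emph{which} corner or sub-segment was altered. This typically requires that $\Phi$ acts at a \emph{canonical extremal} location (e.g. the leftmost/lowest occurrence of the offending pattern), so that the region to be reversed in $\Phi(D)$ is forced by an analogous extremality condition there. Making this reversibility argument airtight — in particular handling boundary cases where the offending pattern sits at the very start or end of the path, or where multiple patterns overlap — is where the real work lies. A secondary, more minor point is bookkeeping the coprimality hypothesis on $(r,s)$: it guarantees we are in the clean regime of Theorem~\ref{coprime-thm} with no cyclic-equivalence identifications or $\alpha/2$ correction term, so the sum over Dyck paths is a genuine signed count of individual paths and the injection argument applies verbatim; for the general $\alpha$ one would need to track $\Phi$ through the cyclic equivalence classes, but that is outside the scope of this statement.
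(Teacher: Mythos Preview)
Your framework is right --- start from the signed count $\mathrm{mult}(\alpha)=\#\{c=1\}-\#\{c=-1\}$ and compare with $\#\{c=1,\ D\notin\mathrm{Im}\,\Phi\}$ --- but the step you single out as ``where the real work lies'' is both unnecessary and false. The paper states explicitly (Remark~\ref{rmk-phi}) that $\Phi$ is \emph{not} injective in general; Section~\ref{section-sharper} is devoted precisely to choosing the $M_{u\times v}$ so as to make $\Phi$ ``as close to an injection as possible''. So a proof built around establishing injectivity cannot succeed.

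Fortunately the inequality needs nothing of the sort. For any map $\Phi:\{c=-1\}\to\{c=1\}$ one has the trivial bound $\#\mathrm{Im}\,\Phi\le\#\{c=-1\}$, hence
\[
\#\{c=1,\ D\notin\mathrm{Im}\,\Phi\}=\#\{c=1\}-\#\mathrm{Im}\,\Phi\ \ge\ \#\{c=1\}-\#\{c=-1\}=\mathrm{mult}(\alpha).
\]
Note that your displayed inequality $\#\{c=-1\}\le\#\{c=1,\ D\in\mathrm{Im}\,\Phi\}$ points the wrong way; with that direction the subtraction step you wrote would give $\ge$, not $\le$. (Under your injectivity hypothesis both sides coincide, which is why you still reached the correct conclusion, but the logic as written is off.) In the coprime case this is already the whole proof, since $\Phi$ is given with codomain $\{c=1\}$ by construction. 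The paper's proof of the general statement (Theorem~\ref{thm-phi}) follows the same counting, with the extra work being to show that the $\alpha/2$ correction term is dominated by the contribution of those $\mathcal D$ whose image $\Phi(\mathcal D)$ fails to be essential; that part does require building a genuine injection, but a different one, sending each essential $\mathcal D_0$ of weight $\alpha/2$ with $c(\mathcal D_0)=-1$ to $\pi(D_0\,\Phi(D_0))$.
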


This upper bound is quite sharp and gives exact root multiplicities for roots up to height 16 with a suitable choice of $\Phi$. In
Section~\ref{section-sharper}, the function $\Phi$ will be carefully constructed. 
The resulting upper bound is satisfactorily accurate and enlightens combinatorics of Dyck paths related to root multiplicities.

Our approach clearly  extends to higher rank  Kac-Moody algebras by replacing Dyck paths with certain lattice paths. In a subsequent paper, we will consider higher rank cases; in particular, we will study the Feingold-Frenkel rank $3$ algebra $\mathfrak F$. We hope that our approach may bring  significant advancements toward Frenkel's conjecture for the algebra $\mathfrak F$.


\vskip 1cm

\section{Rank Two Symmetric Hyperbolic Kac-Moody Algebras} \label{hyperbolic}

In this section, we fix our notations for rank $2$ hyperbolic
Kac-Moody algebras. A general theory of Kac-Moody algebras can be
found in \cite{Kac}, and the root systems of rank two hyperbolic
Kac-Moody algebras were studied by Lepowsky and Moody \cite{LM} and
Feingold \cite{Fein}. Root multiplicities of these algebras were
investigated by Kang and Melville \cite{KaMe}.

Let $A=\begin{pmatrix} 2 & -a \\ -a & 2 \end{pmatrix}$ be a
generalized Cartan matrix with $a \ge 3$, and  $\mathcal H(a)$ be
the hyperbolic Kac-Moody algebra associated with the matrix $A$. In
this section, we write $\mathfrak g = \mathcal H(a)$ if there is no
need to specify $a$. Let $\{ h_1, h_2 \}$ be the set of simple
coroots in the Cartan subalgebra $\mathfrak h = \mathbb C h_1 \oplus
\mathbb C h_2 \subset \mathfrak g$. Let $\{ \alpha_1 , \alpha_2 \}
\subset \mathfrak h^*$ be the set of simple roots, and $Q=\mathbb Z
\alpha_1 \oplus \mathbb Z \alpha_2$ be the root lattice.  The set of
roots of $\mathfrak g$ will be denoted by $\Delta$, and the set of
positive (resp. negative) roots by $\Delta^+$ (resp. by $\Delta^-$),
and the set of real (resp. imaginary) roots by
$\Delta_{\mathrm{re}}$ (resp. by $\Delta_{\mathrm{im}}$). We will
use the notation $\Delta^+_{\mathrm{re}}$ to denote the set of
positive real roots. Similarly, we use $\Delta^+_{\mathrm{im}}$,
$\Delta^-_{\mathrm{re}}$ and $\Delta^-_{\mathrm{im}}$. The Lie
algebra $\mathfrak g$ has the root space decomposition $\mathfrak g
= \mathfrak h \oplus \bigoplus_{\alpha \in \Delta} \mathfrak g_{\alpha}$ and we define the
{\em multiplicity} of $\alpha$ by $\mathrm{mult} \, \alpha := \dim
\mathfrak g_\alpha$.

We define a symmetric bilinear form on $\mathfrak h^*$ by $(\alpha_i | \alpha_j)=a_{ij}$, where $a_{ij}$ is the $(i,j)$-entry of the Cartan matrix $A$. The simple reflection corresponding to $\alpha_i$ in the root system of $\mathfrak g$ is denote by $r_i$ ($i=1, 2$), and the Weyl group $W$ is given by $W=\{ (r_1r_2)^i, r_2(r_1r_2)^i \, | \, i \in \mathbb Z \}$. Define  a sequence $\{ B_n \}$ by
\[ B_0=0, \quad B_1=1, \quad B_{n+2} = a B_{n+1} - B_n \quad \text{ for } n \ge 0. \] It can be shown that \[ B_n =  \frac {1 -\gamma^{2n}} { \gamma^{n-1}(1-\gamma^2)} \qquad (n \ge 0) ,\] where $\gamma = \frac {a +\sqrt{a^2-4}}{2}$. We will write $(A,B)=A \alpha_1 + B \alpha_2$. Then the set of real roots are given by
\[ \Delta^+_{\mathrm{re}} = \left \{ (B_n,B_{n+1}), \ ( B_{n+1},B_{n}) \ | \ n \ge 0   \right \} . \]
See \cite{KaMe} for details.
To describe the set of imaginary roots, we first define the set
\[ \Omega_k = \left \{ (m,n) \in \mathbb Z_{\ge 0} \times \mathbb Z_{\ge 0} : \sqrt{\frac {4 k}{a^2 -4} } \le m \le \sqrt {\frac k {a-2} },\ n = \frac { am-\sqrt{(a^2-4)m^2 -4k}} 2 \right \} \]  for $k \ge 1$.
\begin{Prop} \cite{KaMe} For $a \ge 3$, the set of positive imaginary roots $\alpha$ of $\mathcal H (a)$ with $(\alpha |\alpha)=-2k$ is
\begin{equation*} \label{imroot} \Delta^+_{\mathrm{im}, k} = \left \{ \begin{array}{l} (m,n), \ (mB_{j+1}-nB_j, mB_{j+2}-nB_{j+1} ), \\ (mB_{j+2}-nB_{j+1}, mB_{j+1}-nB_j ) \end{array} \ \Big | \ (m,n) \in \Omega_k  \text{ or } (n,m) \in \Omega_k  , \  j \ge 0  \right \}. \end{equation*}
\end{Prop}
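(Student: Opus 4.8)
The plan is to combine the general description of the imaginary roots of a symmetrizable Kac--Moody algebra with an explicit analysis of the quadratic form $r^2-ars+s^2$ and of the (infinite dihedral) Weyl group $W$; this essentially recovers the computation of \cite{KaMe}. By \cite[Ch.~5]{Kac} we have $\Delta^+_{\mathrm{im}}=\bigcup_{w\in W}w(K)$, where $K$ is the set of nonzero $\beta=r\alpha_1+s\alpha_2$ with $r,s\in\mathbb Z_{\ge0}$, connected support, and $(\beta|\alpha_i)\le0$ for $i=1,2$. In rank two this forces $r,s\ge1$ and, since $(\beta|\alpha_1)=2r-as$ and $(\beta|\alpha_2)=2s-ar$, yields $K=\{(r,s):r,s\ge1,\ \tfrac2a\le\tfrac rs\le\tfrac a2\}$. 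Because $(\beta|\beta)=2(r^2-ars+s^2)$ is $W$-invariant, $\Delta^+_{\mathrm{im},k}=\bigcup_{w\in W}w(K\cap C_k)$ with $C_k:\,r^2-ars+s^2=-k$. (One even gets the cleaner statement $\Delta^+_{\mathrm{im}}=\{(r,s):r,s\ge1,\ r^2-ars+s^2<0\}$, since for $a\ge3$ the value $0$ is not attained on $\mathbb Z_{\ge1}^2$, and the reflections $r_1:(r,s)\mapsto(as-r,s)$, $r_2:(r,s)\mapsto(r,ar-s)$ preserve $\{r,s\ge1,\ r^2-ars+s^2<0\}$ and strictly decrease $r+s$ outside $K$.)

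The next step is to identify $K\cap C_k$. Solving $C_k$ for $s$ gives $s=\tfrac12\bigl(ar\pm\sqrt{(a^2-4)r^2-4k}\bigr)$, so a real point requires $r\ge\sqrt{4k/(a^2-4)}$; on the lower branch $s_-(r)$ one checks $r\le s_-(r)\le\tfrac{ar}2$, with equality $s_-(r)=\tfrac{ar}2$ only at $r=\sqrt{4k/(a^2-4)}$ and $s_-(r)=r$ only at $r=\sqrt{k/(a-2)}$ (where $C_k$ meets the diagonal). Hence the integer points of $K\cap C_k$ with $r\le s$ are precisely $\Omega_k$; the upper branch never enters $K$ and $s_-(r)\ge1$ is automatic, while the coordinate-swap symmetry of both $C_k$ and $K$ shows that the points with $r\ge s$ form $\sigma(\Omega_k):=\{(n,m):(m,n)\in\Omega_k\}$. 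Thus $K\cap C_k=\Omega_k\cup\sigma(\Omega_k)$ with no extraneous points, and since the coordinate swap $\sigma$ is the diagram automorphism and not an element of $W$, the $W$-orbits of $(m,n)$ and of $\sigma(m,n)=(n,m)$ are in general distinct (coinciding only when $m=n$). So $\Delta^+_{\mathrm{im},k}$ is the union of the $W$-orbits of the points of $\Omega_k\cup\sigma(\Omega_k)$.

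It remains to compute these orbits. Using $r_1(r,s)=(as-r,s)$, $r_2(r,s)=(r,ar-s)$ and induction on word length, with the recursion $B_{j+2}=aB_{j+1}-B_j$ (extended by $B_{-j}=-B_j$, together with the derived identity $B_{2j}=(a^2-2)B_{2j-2}-B_{2j-4}$), one shows that the $W$-images of $(m,n)$, together with the $W$-images of $(n,m)$, are exactly the vectors $(mB_{\ell+1}-nB_\ell,\ mB_{\ell+2}-nB_{\ell+1})$ for $\ell\in\mathbb Z$ and their coordinate swaps. Re-indexing to $\ell\ge0$: the parts with $\ell\le-2$ are absorbed, after a coordinate swap, into the $\ell\ge0$ parts of the corresponding families with base point $(n,m)$, and $\ell=-1$ returns $(m,n)$ and $(n,m)$ themselves, so everything collapses into the three families $(m,n)$, $(mB_{j+1}-nB_j,\,mB_{j+2}-nB_{j+1})$, $(mB_{j+2}-nB_{j+1},\,mB_{j+1}-nB_j)$ with $j\ge0$. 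Letting the base point run over $\Omega_k\cup\sigma(\Omega_k)$ gives the stated formula. I expect the main obstacle to be precisely this last bookkeeping: the two $W$-orbits attached to a pair $\{(m,n),(n,m)\}$ are interleaved along the families (the even-index and odd-index terms lying in the two distinct orbits), so one must track the re-indexing carefully enough to see that every orbit element is hit exactly by the $j\ge0$ families and that nothing outside the two orbits appears; verifying that the real bounds $\sqrt{4k/(a^2-4)}\le m\le\sqrt{k/(a-2)}$ cut out exactly the fundamental-cone portion of $C_k$ is a minor additional point.
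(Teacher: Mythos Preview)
The paper does not give its own proof of this proposition: it is stated with the citation \cite{KaMe} and used as input, with no proof environment following it. So there is nothing in the present paper to compare your argument against.

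That said, your plan is a correct reconstruction of the Kang--Melville argument. The identification of the fundamental chamber $K=\{(r,s):\tfrac2a\le\tfrac rs\le\tfrac a2\}$, the solution of $r^2-ars+s^2=-k$ on the lower branch, and the verification that $\Omega_k\cup\sigma(\Omega_k)=K\cap C_k$ are all accurate (your check that $r\le s_-(r)\iff r\le\sqrt{k/(a-2)}$ is exactly what pins down the upper $m$-bound in $\Omega_k$, and the upper branch lies outside $K$ except at the tangency point). The orbit computation via $r_1,r_2$ and the recursion for $B_j$ is standard; you are right that the only delicate point is the bookkeeping showing that the full $W$-orbit of $\{(m,n),(n,m)\}$ is exhausted by the $j\ge0$ families listed, with the negative-index terms absorbed after swapping the base point. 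This is routine once one writes $(r_2r_1)^j$ and $(r_1r_2)^j$ explicitly in terms of $B_j$.
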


The denominator identity is given by
\[ \prod_{\alpha \in \Delta^+} (1 -e^{- \alpha})^{\mathrm{mult} \, \alpha} = \sum_{ w \in W} (-1)^{\ell(w)} e^{w \rho - \rho} ,\] where $\ell(w)$ is the length of $w$ and $\rho= (\alpha_1 +\alpha_2)/(2-a)$.

\vskip 1 cm

\section{Contribution multiplicity}  \label{contribution}

In this section, we fix a hyperbolic Kac-Moody algebra $\mathcal H(a)$, $a \ge 3$. First, we recall Kang and Melville's result \cite{KaMe} on root multiplicities of $\mathcal H(a)$. For $r, s \in \mathbb Z_{\geq 0}$, write $\alpha= r \alpha_1 + s \alpha_2$.
As in \cite{KaMe}, we define a sequence $\{A_n\}_{n\geq 0}$
    as follows:
    $$
A_0=0, \quad A_1=1,$$
$$A_{n+2}=aA_{n+1}-A_n +1 \text{ for }n\geq 0.$$

 Let
$$
\mathscr{C}=\{\mathbf{c}=(c_0^0, c_0^1,c_1^0, c_1^1,...)\ | \ c_i^j \text{ are non-negative integers}, j\in\{0,1\},i\geq 0\},
$$
and let \begin{equation} \label{eqn-c} \mathscr C(\alpha)=\{ \mathbf c \in \mathscr C \, | \,  \sum_{i \ge 0} (c^0_i A_{i+1} + c^1_i A_i)=r, \  \sum_{i \ge 0} (c^0_i A_{i} + c^1_i A_{i+1})=s \}.\end{equation} We write $\tau | \alpha$ if $\alpha= d \tau$ for some $d \in \mathbb Z_{>0}$, and set $\alpha/\tau =d$.

\begin{Prop}\cite[Proposition 2.1, 2.2]{KaMe} \label{KM}
We have
\begin{equation} \label{KM-for} \mathrm{mult} \, (\alpha) = \sum_{\tau | \alpha} \mu \left ( \frac{\alpha}{\tau} \right ) \frac{\tau}{\alpha} \sum_{\mathbf c \in \mathscr C(\tau)} (-1)^{\sum_{i : \mathrm{odd}} (c^0_i+c^1_i)} \frac{( \sum_{i \ge 0} (c^0_i+c^1_i) -1)!} {\prod_{i \ge 0} c^0_i ! c^1_i !} .\end{equation}
\end{Prop}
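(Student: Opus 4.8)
\emph{Proof proposal.} The plan is to extract the formula directly from the denominator identity in two moves: first evaluate the Weyl-side sum $\sum_{w\in W}(-1)^{\ell(w)}e^{w\rho-\rho}$ in closed form, and then apply the classical generalized-Witt-formula / M\"obius-inversion argument to the resulting product identity. All manipulations take place in the ring of formal power series in $e^{-\alpha_1},e^{-\alpha_2}$, where comparison of coefficients of any fixed $e^{-\beta}$ is legitimate since only finitely many terms contribute.

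For the first move I would use the explicit description $W=\{(r_1r_2)^i,\ r_2(r_1r_2)^i\mid i\in\mathbb Z\}$ together with $\rho=(\alpha_1+\alpha_2)/(2-a)$. Since $\langle\rho,\alpha_i^\vee\rangle=1$, one has $r_i\rho=\rho-\alpha_i$, so every $w\rho-\rho$ lies in $Q$; computing these vectors by induction on $|i|$, the recursion that $r_1r_2$ imposes on them is, after accounting for the fact that $\rho\notin Q$ (which is exactly what produces the inhomogeneous ``$+1$''), the defining recursion $A_{n+2}=aA_{n+1}-A_n+1$, $A_0=0$, $A_1=1$. One finds that the elements $(r_1r_2)^i$ ($i\in\mathbb Z$) have even length and contribute $w\rho-\rho\in\{0\}\cup\{-(A_{2k},A_{2k-1}),-(A_{2k-1},A_{2k}):k\ge1\}$, while the elements $r_2(r_1r_2)^i$ have odd length and contribute $w\rho-\rho\in\{-(A_{2k+1},A_{2k}),-(A_{2k},A_{2k+1}):k\ge0\}$. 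Writing $\delta_j^0=(A_{j+1},A_j)$ and $\delta_j^1=(A_j,A_{j+1})$ and matching indices, the contributions with $j$ even acquire sign $(-1)^{\mathrm{odd}}=-1$ and those with $j$ odd acquire sign $+1$, so the denominator identity becomes
\[ \prod_{\alpha\in\Delta^+}(1-e^{-\alpha})^{\mathrm{mult}\,\alpha}\ =\ 1-\sum_{j\ge0}(-1)^j\left(e^{-\delta_j^0}+e^{-\delta_j^1}\right)\ =:\ 1-X. \]

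For the second move, take logarithms of both sides, expand $\log(1-e^{-\alpha})=-\sum_{n\ge1}e^{-n\alpha}/n$ on the left and $\log(1-X)=-\sum_{m\ge1}X^m/m$ on the right, and compare the coefficient of $e^{-\beta}$ for each $\beta=r\alpha_1+s\alpha_2$ with $(r,s)\ne(0,0)$. The left side yields $\sum_{n\mid\beta}\tfrac1n\,\mathrm{mult}(\beta/n)$. On the right, a multinomial expansion of $X^m$ — using $\prod_i\big((-1)^i\big)^{c_i^0+c_i^1}=(-1)^{\sum_{i\ \mathrm{odd}}(c_i^0+c_i^1)}$, and the fact that the exponent $\sum_i(c_i^0\delta_i^0+c_i^1\delta_i^1)$ equals $\beta$ precisely when $\mathbf c\in\mathscr C(\beta)$ — yields $\sum_m\tfrac1m[e^{-\beta}]X^m=\sum_{\mathbf c\in\mathscr C(\beta)}(-1)^{\sum_{i\,\mathrm{odd}}(c_i^0+c_i^1)}\frac{(\sum_i(c_i^0+c_i^1)-1)!}{\prod_i c_i^0!\,c_i^1!}=:W(\beta)$. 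Hence $\sum_{n\mid\beta}\tfrac1n\,\mathrm{mult}(\beta/n)=W(\beta)$ for all such $\beta$, and M\"obius inversion (the sum over $n$ being finite, since $n\mid\gcd(r,s)$) gives $\mathrm{mult}(\alpha)=\sum_{\tau\mid\alpha}\mu(\alpha/\tau)\tfrac{\tau}{\alpha}W(\tau)$, which is precisely \eqref{KM-for}; in passing, this forces $\mathrm{mult}(\alpha)=0$ whenever $\alpha$ is not a root, as it must.

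The main obstacle is the first move: the exact evaluation of $\sum_{w\in W}(-1)^{\ell(w)}e^{w\rho-\rho}$, i.e.\ the inductive identification of the vectors $w\rho-\rho$ with the $-\delta_j^0,-\delta_j^1$ and — crucially — the verification that the parity of $\ell(w)$ matches the parity of the index $j$. This is where the combinatorics of the rank-two Weyl group and the shift by the non-integral $\rho$ both have to be tracked carefully; it is a finite but delicate bookkeeping. Once the product identity $\prod_{\alpha\in\Delta^+}(1-e^{-\alpha})^{\mathrm{mult}\,\alpha}=1-X$ is in hand, the remainder is the standard formal-power-series manipulation behind the generalized Witt formula and is routine.
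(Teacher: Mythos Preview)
The paper does not itself prove this proposition: it is quoted verbatim from Kang--Melville \cite{KaMe} (see also the introduction, where the Berman--Moody closed formula \cite{BM} is mentioned as the underlying mechanism). So there is no in-paper proof to compare against.

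Your argument is the standard Berman--Moody/generalized-Witt derivation and it is correct. The first move --- identifying $w\rho-\rho$ with $-\delta_j^\epsilon=-(A_{j+1},A_j)$ or $-(A_j,A_{j+1})$ and matching the parity of $\ell(w)$ with the parity of $j$ --- is exactly what Kang--Melville record (their Proposition~2.1), and your inductive description of how the inhomogeneous recursion $A_{n+2}=aA_{n+1}-A_n+1$ arises from the non-integral $\rho=(\alpha_1+\alpha_2)/(2-a)$ is the right explanation. The second move (taking logarithms, multinomial expansion of $X^m$, then M\"obius inversion on the divisor lattice of $\alpha$) is routine and your bookkeeping of the sign $\prod_i((-1)^i)^{c_i^0+c_i^1}=(-1)^{\sum_{i\ \mathrm{odd}}(c_i^0+c_i^1)}$ and of the exponent condition $\sum_i(c_i^0\delta_i^0+c_i^1\delta_i^1)=\beta\Leftrightarrow\mathbf c\in\mathscr C(\beta)$ is accurate. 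One small point worth making explicit when you write this up: the vectors $\delta_j^0,\delta_j^1$ for $j\ge0$ are pairwise distinct (since $A_0<A_1<A_2<\cdots$), so the map $w\mapsto w\rho-\rho$ is injective and no cancellation occurs on the Weyl side before you form $1-X$.
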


For $r, s \in \mathbb Z_{\geq 0}$, define a {\em Dyck path of size $r \times s$} to be a lattice path from $(0,0)$ to $(r,s)$ that never goes above the main diagonal joining $(0,0)$ and $(r,s)$. We identify a Dyck path with a word in alphabet $\{ 1, 2 \}$, where $1$ represents a horizontal move and $2$ a vertical move.  Then a  Dyck path has $12$-corners and $21$-corners. We consider the end points $(0,0)$ and $(r,s)$ as $21$-corners.
We define the {\em weight} of a Dyck path $D$ of size $r \times s$ to be
$\mathrm{wt} (D) := r \alpha_1 +s \alpha_2 \in Q$.

We say that  two Dyck paths $D_1$ and $D_2$ are {\em equivalent} if $D_1$  is a cyclic permutation of $D_2$ (as words in the alphabet $\{ 1,2 \}$). Then we obtain {\em equivalent classes} of Dyck paths. When no confusion arises, we will frequently identify an equivalent class $\mathcal D$ with any representative $D\in\mathcal D$.        For an equivalence class $\mathcal D$, the {\em weight} $\mathrm{wt}(\mathcal D)$ is well-defined. The concatenation of  Dyck paths $D_1, D_2, \dots, D_r$ will be denoted by $D_1D_2 \cdots D_r$. For a positive integer $d$ and a Dyck path $D$, the concatenation $D^d$ is defined in an obvious way. We distinguish a concatenation from its resulting path. The resulting path of a concatenation $D_1D_2 \cdots D_r$ will be denoted by $\pi(D_1D_2 \cdots D_r)$. A Dyck path $D$ is said to be {\em essential} if $D \neq \pi(D_0^d)$ for any subpath $D_0$ and $d \ge 2$. Likewise, an equivalence class $\mathcal D$ is said to be {\em essential} if any element $D$ of $\mathcal D$ is essential.

\begin{Def}
For any positive integers $u,v$, denote by $L_{u\times v}$ the Dyck path of size $u\times v$, which consists of $u$ horizontal edges followed by $v$ vertical edges, and call it an {\em elementary} path.
We say that the elementary Dyck path $L_{u\times v}$ is of
$$\left\{\begin{array}{ll}
\text{ type }(-1), & \text{ if }A_n\leq \min(u,v), \  \max( u,v) <A_{n+1},\text{ and }n:\text{even}>0;\\
\text{ type } (1), & \text{ if }A_n\leq \min(u,v), \ \max( u,v) <A_{n+1},\text{  and }n:\text{odd};\\
\text{ type } (0), & \text{ if }A_n\leq \min(u,v) <A_{n+1}\leq\max(u,v),\text{  and }n>0.
\end{array}\right.$$
\end{Def}

For a given Dyck path $D$, define  $\mathscr{S}(D)$ to be the set of  all concatenations of copies of $L_{A_{i+1}\times A_{i}}$  and copies of $L_{A_{i}\times A_{i+1}}$ in some order that realize $D$.
For a concatenation $\mathbf s$ in $\mathscr{S}(D)$, the number of copies of $L_{A_{i+1}\times A_{i}}$ is denoted by $c_i^0(\mathbf s)$ and the number of copies of $L_{A_{i}\times A_{i+1}}$ by $c_i^1(\mathbf s)$. We define \[ \mathrm{seq}(\mathbf s) = (c_i^0(\mathbf s), c_i^1(\mathbf s))_{i\ge 0} \in \mathscr C \quad \text{ and } \quad \mathrm{sgn}(\mathbf s) =(-1)^{\sum_{i:\text{odd}}
(c_i^0(\mathbf s)+c_i^1(\mathbf s))}. \]
 If $\mathcal D$ is an equivalence class, we observe that $\mathscr S (D_1)$ is in one-to-one correspondence with  $\mathscr S(D_2)$ through cyclic permutation for $D_1, D_2 \in \mathcal D$. For an equivalence class $\mathcal D$, we define the set $\mathscr S(\mathcal D)$ to be equal to $\mathscr S(D)$ for a fixed Dyck path  $D \in \mathcal D$.
Now the {\em contribution multiplicity} $c(\mathcal D)$ of $\mathcal D$ is defined  by
$$
c(\mathcal D)=\sum_{\mathbf s \in \mathscr{S}(\mathcal D)}\mathrm{sgn}(\mathbf s).
$$

For a Dyck path $D$, a subpath $D_0$ of $D$  is called \emph{framed} if the starting point and the ending point of $D_0$ are both $21$-corners.

\begin{Lem} \label{lem-c}
For any Dyck path $D$, we have \[c(D)= \begin{cases} 0, \quad \text{if $D$ contains a framed subpath of type $(0)$}; \\ (-1)^{\# \text{ of framed subpaths of $D$ of type $(-1)$}}, \quad \text{otherwise}.\end{cases} \]
\end{Lem}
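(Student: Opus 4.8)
The plan is to analyze the structure of a Dyck path $D$ by cutting it at its $21$-corners into a concatenation of framed subpaths, and to show that $\mathscr{S}(D)$ respects this decomposition in a way that makes $c(D)$ multiplicative over framed pieces. First I would observe that if $p_0 = (0,0), p_1, \dots, p_m = (r,s)$ are the $21$-corners of $D$ listed in order along the path, then every elementary path $L_{A_{i+1}\times A_i}$ or $L_{A_i \times A_{i+1}}$ appearing in a concatenation $\mathbf{s} \in \mathscr{S}(D)$ must begin and end at $21$-corners of $D$: an elementary path of size $u \times v$ with $\{u,v\} = \{A_i, A_{i+1}\}$ starts with a maximal horizontal run and ends with a maximal vertical run (when $u,v \ge 1$), so its interior cannot contain any $21$-corner of $D$ in a way compatible with the word structure, and any partition of $D$ into such elementary paths can only cut $D$ at $21$-corners. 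Consequently $\mathscr{S}(D)$ is the ``product'' of the sets $\mathscr{S}(D_0)$ over the framed subpaths $D_0 = $ the piece of $D$ from $p_{t-1}$ to $p_t$, and since $\mathrm{sgn}$ is multiplicative under concatenation (the exponent is a sum over $i$ of counts that add), we get $c(D) = \prod_{t=1}^{m} c(D_0^{(t)})$ where $D_0^{(t)}$ runs over the framed subpaths between consecutive $21$-corners. Actually I should be slightly careful: a framed subpath in the statement is any subpath whose endpoints are $21$-corners, not just the minimal ones; but the type of a framed subpath $L_{u\times v}$ only makes sense for \emph{elementary} framed subpaths, and a minimal framed piece (between consecutive $21$-corners) is forced to be elementary — it consists of a single maximal horizontal run followed by a single maximal vertical run, i.e. it equals $L_{u \times v}$ for some $u, v \ge 1$. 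So the framed subpaths referred to in the lemma are exactly these minimal elementary pieces, and the factorization above is over precisely them.

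The problem thus reduces to computing $c(L_{u\times v})$ for a single elementary path with $u, v \ge 1$, which I would do directly from the definition. Let $n$ be determined by $A_n \le \min(u,v)$ and the relevant inequality for $\max(u,v)$. The key subcase is when $L_{u\times v}$ is itself of type $(-1)$, $(0)$, or $(1)$ — then $\mathscr{S}(L_{u\times v})$ contains the trivial single-step concatenation using one copy of $L_{A_{n+1}\times A_n}$ or $L_{A_n \times A_{n+1}}$ precisely when $\{u,v\} = \{A_n, A_{n+1}\}$. For general $u,v$ one must enumerate all ways to write $L_{u \times v}$ (as a word $1^u 2^v$) as an ordered concatenation of words of the form $1^{A_{i+1}}2^{A_i}$ and $1^{A_i}2^{A_{i+1}}$. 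The shape of $L_{u\times v}$ forces any such concatenation to have the form $1^{A_{i_1+1}}2^{A_{i_1}} 1^{A_{i_2+1}} 2^{A_{i_2}} \cdots$ — but after the first block, a second block starting with $1$'s would create an extra $21$-corner inside $1^u 2^v$, which has none in its interior. Hence \emph{no} concatenation of two or more elementary pieces can realize $L_{u\times v}$; the only possible $\mathbf{s}$ is a single elementary piece, which exists iff $(u,v) = (A_{n+1}, A_n)$ or $(u,v)=(A_n, A_{n+1})$. Therefore $c(L_{u\times v})$ equals $(-1)^n$ when $L_{u\times v}$ equals one of these two elementary paths (so $n>0$ since $A_0 = 0 < 1 \le u$), and equals the empty sum $0$ otherwise — but I must reconcile ``$0$'' with the claimed formula. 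This is where the type $(0)$ case enters: if $L_{u\times v}$ has type $(0)$ then $\min(u,v)$ and $\max(u,v)$ straddle $A_{n+1}$, so $(u,v)$ is neither $(A_n,A_{n+1})$ nor $(A_{n+1},A_n)$, giving $c = 0$, matching the ``framed subpath of type $(0)$'' clause. If it has type $(1)$ or $(-1)$ then $\max(u,v) < A_{n+1}$ with $A_n \le \min(u,v)$; I claim this forces $\{u,v\} = \{A_n, A_{n+1}\}$ — wait, that's false in general (e.g. $u = v = A_n$ is allowed). So in fact for type $(\pm 1)$ pieces that are \emph{not} elementary of the right size, $c$ is also $0$. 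This means the lemma as literally stated cannot be read as ``$(-1)^{\#\text{type }(-1)\text{ framed subpaths}}$'' using the loose type definition — rather, the intended reading must be that the relevant notion of ``framed subpath of type $t$'' is one where the minimal framed piece is \emph{exactly} an elementary path $L_{A_{i+1}\times A_i}$ or $L_{A_i \times A_{i+1}}$, and all other minimal framed pieces contribute factor $0$ or $1$. I would state and prove the computation of $c(L_{u\times v})$ precisely, then interpret the lemma accordingly.

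The main obstacle I anticipate is exactly this bookkeeping around the type definition versus the actual size of minimal framed pieces: I need to pin down, for each minimal framed piece $L_{u\times v}$ with $A_n \le \min(u,v)$, whether $\mathscr{S}(L_{u\times v}) = \emptyset$, a singleton with sign $(-1)^n$, or something larger, and confirm that ``larger'' never happens (the no-interior-$21$-corner argument handles this). Once $c(L_{u\times v}) \in \{-1, 0, 1\}$ is established with the value $-1$ occurring exactly for the ``genuine type $(-1)$'' elementary pieces (which are the ones the lemma calls framed subpaths of type $(-1)$), $0$ occurring whenever some minimal piece has no valid elementary decomposition — in particular whenever a type $(0)$ piece is present, since such a piece can never be elementary of the right shape — and $1$ otherwise, the multiplicativity $c(D) = \prod_t c(L_{u_t \times v_t})$ immediately yields the two cases of the lemma. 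I would therefore organize the write-up as: (1) minimal framed pieces are elementary, cutting points are $21$-corners; (2) $\mathscr{S}(D) \cong \prod_t \mathscr{S}(\text{piece}_t)$ and $\mathrm{sgn}$ is multiplicative, hence $c(D) = \prod_t c(\text{piece}_t)$; (3) compute $c(L_{u\times v}) \in \{-1,0,1\}$ for each elementary shape via the no-interior-corner argument; (4) assemble, noting a type $(0)$ piece forces the product to vanish and type $(-1)$ pieces each contribute a sign.
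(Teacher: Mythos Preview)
Your multiplicativity reduction (steps (1)--(2)) is essentially correct and matches the opening paragraph of the paper's proof: no elementary word $1^a 2^b$ can straddle an interior $21$-corner of $D$, so every $\mathbf{s}\in\mathscr S(D)$ is cut at each $21$-corner, giving $\mathscr S(D)\cong\prod_t\mathscr S(L_{u_t\times v_t})$ and hence $c(D)=\prod_t c(L_{u_t\times v_t})$ over the minimal framed pieces. (Your stronger claim that ``every elementary path must begin and end at $21$-corners of $D$'' is false, but the weaker statement that $21$-corners are always cut points is what you actually need and use.)

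The genuine gap is in step (3). You argue that $\mathscr S(L_{u\times v})$ has at most one element because two consecutive nontrivial blocks would create an interior $21$-corner, and conclude that $c(L_{u\times v})=0$ unless $(u,v)\in\{(A_n,A_{n+1}),(A_{n+1},A_n)\}$. This overlooks the elementary pieces $L_{A_1\times A_0}=L_{1\times 0}$ (the single letter $1$) and $L_{A_0\times A_1}=L_{0\times 1}$ (the single letter $2$), which are always available since $A_0=0$. In fact $\mathscr S(L_{u\times v})$ always contains the ``all-trivial'' decomposition into $u$ copies of $1$ followed by $v$ copies of $2$ (of sign $+1$), together with, for each $i\ge1$ whose ``bridge'' $L_{A_{i+1}\times A_i}$ or $L_{A_i\times A_{i+1}}$ fits in the $u\times v$ box, one further decomposition of sign $(-1)^i$; so $\mathscr S(L_{u\times v})$ is almost never a singleton. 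The paper handles this by induction on $u$ (tracking when passing from $u$ to $u+1$ makes a new bridge available and how that shifts the sum by $\pm1$). Alternatively you can sum directly: if $A_n\le\min(u,v)\le\max(u,v)<A_{n+1}$ the admissible bridges are both orientations for $1\le i\le n-1$, giving $c=1+2\sum_{i=1}^{n-1}(-1)^i=(-1)^{n+1}$; if $A_n\le\min(u,v)<A_{n+1}\le\max(u,v)$ exactly one extra bridge at level $i=n$ appears, giving $c=(-1)^{n+1}+(-1)^n=0$. Either route completes the argument; yours as written does not.
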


\begin{proof}
Assume that $D=\pi(D_1D_2)$. Then we have
\begin{align*}
c(D)=& \sum_{\mathbf s  \in \mathscr S(D)} \mathrm{sgn}(\mathbf s) = \sum_{(\mathbf s_1, \mathbf s_2) \in \mathscr S(D_1) \times \mathscr S(D_2)} \mathrm{sgn}(\mathbf s_1) \ \mathrm{sgn}(\mathbf s_2) \\ =& \sum_{\mathbf s_1 \in \mathscr S(D_1)} \mathrm{sgn}(\mathbf s_1)  \sum_{\mathbf s_2 \in \mathscr S(D_2)} \mathrm{sgn}(\mathbf s_2) = c(D_1) c(D_2).
\end{align*}
Thus it is enough to consider the case when $D$ is an elementary path. In this case, we need to prove that $c(D)$ is equal to its type.  We will use induction. Clearly, $c(L_{1 \times 0})=c(L_{0 \times 1})=1$, and the assertion of the lemma is true. Suppose that the assertion is true for $L_{u \times v}$. We will prove the case $L_{(u+1) \times v}$. The other case $L_{u \times (v+1)}$ is obtained from the symmetry.

Write $L=L_{u \times v}$ and $L_1=L_{(u+1) \times v}$ to ease the notations. Assume that $c(L)$ is equal to its type. If $L$ and $L_1$ is of the same type, then we get all the elements of $\mathscr S(L_1)$ from those of $\mathscr S(L)$ by adding $1$ to $c_0^0(\mathbf s)$, $\mathbf s \in \mathscr S(L)$, and $c(L_1)=c(L)$.

If $L$ is of type $(1)$ and $L_1$ is of type $(0)$, then the path $L_1$ newly contains $L_{A_{n+1} \times A_n}$ as a subpath for $n$ odd, where $A_{n+1}=u+1$. Consequently, $c(L_1)=c(L)-1=0$ by induction. If $L$ is of type $(-1)$ and $L_1$ is of type $(0)$, then the path $L_1$ newly contains $L_{A_{n+1} \times A_n}$ as a subpath for $n$ even, where $A_{n+1}=u+1$. Thus, again, we have $c(L_1)=c(L)+1=0$.

Similarly, if $L$ is of type $(0)$ and $L_1$ is of type $(-1)$ (respectively, if $L$ is of type $(0)$ and $L_1$ is of type $(1)$), then $L_1$ newly contains $L_{A_{n+1} \times A_n}$ as a subpath for $n$ odd (respectively, for $n$ even), where $A_{n+1}=u+1$. Thus we have $c(L_1)=c(L)-1=-1$ (respectively, $c(L_1)=c(L)+1=1$). Now, by induction, we are done.
\end{proof}

\begin{Rmk}
The above lemma enables us to compute $c(D)$  efficiently and combinatorially. In particular, $c(D)=1$ if $D$ contains no framed subpaths of type $(0)$ and an even number of  framed subpaths of type $(-1)$.
\end{Rmk}

The following theorem is a combinatorial realization of Kang and Melville's formula \eqref{KM-for}, which says that the root multiplicity of $\alpha$ is equal to the sum of contribution multiplicities $c(\mathcal D)$ of essential equivalence classes $\mathcal D$ of weight $\alpha$ plus some correction term.

\begin{Thm} \label{thm-ff} For $\alpha \in \Delta^+$,  we have
\[ \mathrm{mult}\, (\alpha) = \sum_{\substack{\mathcal D: \, \text{\rm essential} \\ \mathrm{wt} (\mathcal D)=\alpha} } c(\mathcal D) +\sum_{\substack{\mathcal D: \, \text{\rm essential} \\ \mathrm{wt} (\mathcal D)=\alpha/2} } \left \lfloor \tfrac { 1-c(\mathcal D)} 2 \right \rfloor . \]
\end{Thm}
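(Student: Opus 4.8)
The plan is to deduce Theorem \ref{thm-ff} from Kang–Melville's formula \eqref{KM-for} by organizing the sum over $\mathbf c \in \mathscr C(\tau)$ according to the Dyck paths (equivalently, resulting paths of concatenations) that the data $\mathbf c$ can realize. The bridge between the two sides is the observation that $\mathscr C(\tau)$ is exactly the disjoint union, over essential equivalence classes $\mathcal D$ of weight $\tau$ together with the power structure $\mathcal D \mapsto \pi(D_0^d)$, of the sets $\{\mathrm{seq}(\mathbf s) : \mathbf s \in \mathscr S(\cdot)\}$: a tuple $\mathbf c = (c_i^0, c_i^1)$ lies in $\mathscr C(\alpha)$ precisely when $\sum_i (c_i^0 A_{i+1} + c_i^1 A_i) = r$ and $\sum_i (c_i^0 A_i + c_i^1 A_{i+1}) = s$, and these are exactly the constraints saying that a multiset of $c_i^0$ copies of $L_{A_{i+1}\times A_i}$ and $c_i^1$ copies of $L_{A_i \times A_{i+1}}$ has total horizontal length $r$ and total vertical length $s$. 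But a multiset of elementary paths with prescribed total size need not tile a single Dyck path in a unique way, nor tile one at all as a \emph{path} versus as a cyclic word — this is where equivalence classes and the Möbius/cyclic bookkeeping enter.

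First I would recast the right-hand side of \eqref{KM-for} as a sum over concatenations. For a fixed $\tau$, I claim
\[ \sum_{\mathbf c \in \mathscr C(\tau)} (-1)^{\sum_{i:\,\mathrm{odd}}(c_i^0 + c_i^1)} \frac{(\sum_i (c_i^0 + c_i^1) - 1)!}{\prod_i c_i^0 !\, c_i^1 !} = \sum_{\mathcal D:\ \mathrm{wt}(\mathcal D) = \tau} \frac{1}{(\text{length of } \mathcal D \text{ in }L\text{-blocks, counted cyclically})} \sum_{\mathbf s \in \mathscr S(\mathcal D)} \mathrm{sgn}(\mathbf s), \]
the point being the standard cyclic-words identity: the multinomial $\frac{(N-1)!}{\prod c_i^0! c_i^1!}$ counts necklaces (circular arrangements) of the multiset of $N = \sum(c_i^0 + c_i^1)$ elementary blocks, and each such necklace, read starting from any $21$-corner, produces a Dyck path, with the number of valid starting points equal to the number of blocks up to the cyclic symmetry. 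Reorganizing the necklace count by the underlying essential equivalence class $\mathcal D$ and the power $d = \alpha/\tau$ introduces exactly the factor $\tau/\alpha$ needed to match \eqref{KM-for}, and the $\mu(\alpha/\tau)$ factor collapses the power structure onto essential classes by Möbius inversion: $\sum_{\tau | \alpha} \mu(\alpha/\tau)\,(\cdots)$ extracts the "primitive" part, namely essential $\mathcal D$ of weight $\alpha$, plus a defect supported on $\tau = \alpha/2$.

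Second, I would track that defect precisely. When $\alpha = 2\tau$, an essential class of weight $\tau$ contributes to $\mathscr C(\alpha)$ via $\pi(D_0^2)$, and the interaction of the two Möbius terms $\mu(1)$ and $\mu(2)$ with the $\tau/\alpha$ weighting yields, for each essential $\mathcal D$ of weight $\alpha/2$, a net contribution of the form $c(\mathcal D) \cdot (\text{something}) - \tfrac12(\cdots)$ that one checks equals $\lfloor (1 - c(\mathcal D))/2 \rfloor$ — using $c(\mathcal D) \in \{-1,0,1\}$ from Lemma \ref{lem-c}, so $\lfloor(1-c)/2\rfloor$ is $1, 0, 0$ respectively. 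I would verify this by direct case analysis: a palindromic-type subtlety arises because $\pi(D_0^2)$ as a cyclic word has a stabilizer of order $2$, so the necklace count overcounts, and the correction is precisely the floor term. Finally, $c(\mathcal D) = c(D)$ is well-defined on equivalence classes and computable by Lemma \ref{lem-c}, so the statement as written follows.

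The main obstacle I expect is the second step: making the cyclic-word / necklace correspondence between $\mathscr C(\tau)$ and equivalence classes of concatenations genuinely bijective and compatible with signs, in particular handling the non-free cyclic actions (the stabilizers of $\pi(D_0^d)$) so that the $\tau/\alpha$ factors and the Möbius inversion produce exactly one clean correction term at $\alpha/2$ and nothing at $\alpha/d$ for $d \ge 3$. The sign $\mathrm{sgn}(\mathbf s)$ is cyclic-invariant, so it rides along harmlessly, but the combinatorial identity matching multinomial coefficients to (weighted) counts of Dyck paths up to rotation — and confirming no contributions survive from $\tau$ with $\alpha/\tau \ge 3$ — is the delicate bookkeeping that the whole theorem rests on.
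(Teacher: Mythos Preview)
Your overall strategy is the paper's: reorganize the Kang--Melville sum over $\mathscr C(\tau)$ as a weighted sum over cyclic equivalence classes of concatenations, then use M\"obius inversion. The first step (multinomial $\leftrightarrow$ necklace count, with weight $1/d$ on a class with stabilizer of order $d$) is right and matches the paper.

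The gap is in your second step, and it is exactly the point you flag as the obstacle. You attribute the $\alpha/2$ correction to ``a palindromic-type subtlety \dots\ $\pi(D_0^2)$ has a stabilizer of order $2$, so the necklace count overcounts.'' That is not the mechanism: the stabilizer is already handled correctly by the $1/d$ weighting, and there is no overcount. The correction is a \emph{sign} phenomenon. For an equivalence class $\mathcal D$ of weight $\alpha$ with $D=\pi(D_0^d)$, $D_0$ essential of weight $\tau$, the total contribution of all concatenations over $\mathcal D$ is
\[
\mathcal T_{\mathcal D}=\frac{1}{d}\bigl(\mathrm{sgn}(\mathbf s_1)+\cdots+\mathrm{sgn}(\mathbf s_k)\bigr)^d=\frac{1}{d}\,c(D_0)^d,
\]
since a concatenation of $D$ is a $d$-tuple from $\mathscr S(D_0)$. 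With $c(D_0)\in\{-1,0,1\}$ this equals $\tfrac{1}{d}c(D_0)$ \emph{except} when $c(D_0)=-1$ and $d$ is even, where it is $\tfrac{1}{d}c(D_0)+\tfrac{2}{d}$. So a correction $\tfrac{2}{d}$ appears for \emph{every} even $d$, not only $d=2$; your hope that ``nothing survives from $\alpha/\tau\ge 3$'' is false at this stage. What saves you is that summing over $\tau\mid\alpha$ with $d=\alpha/\tau$ even, the extra terms $\sum_{2\tau\mid\alpha}\tfrac{2\tau}{\alpha}\,\delta_{c(\mathcal D_0),-1}$ rewrite (via $\sigma=2\tau$) as $\sum_{\sigma\mid\alpha}\tfrac{\sigma}{\alpha}\sum_{\mathrm{wt}(\mathcal D_0)=\sigma/2}\lfloor\tfrac{1-c(\mathcal D_0)}{2}\rfloor$, i.e.\ they themselves form a divisor sum. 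Now M\"obius inversion applied to the combined identity yields the theorem directly, and the single surviving correction at $\alpha/2$ is a consequence of that inversion, not of a special analysis at $d=2$. Without the identity $\mathcal T_{\mathcal D}=\tfrac{1}{d}c(D_0)^d$ your case analysis in step two will not close.
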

By Lemma \ref{lem-c}, we see that the second sum (i.e., the correction term) is nothing but the number  of essential $\mathcal D$ such that  $\mathrm{wt}(\mathcal D) = \alpha/2$ and $c (\mathcal D)=-1$.

\begin{proof}
Write $\alpha = r \alpha_1 + s \alpha_2$.
Before we deal with the general case,  we first consider a simpler case and assume that $r$ and $s$ are relatively prime. Then the correction term is $0$, and each equivalence class of weight $\alpha$ has only one essential Dyck path. Recall that we defined $\mathscr C(\alpha)$ in \eqref{eqn-c}.
We claim that, for each $\mathbf c=(c_i^0, c_i^1)_{i\ge 0} \in \mathscr C(\alpha)$,  the number of concatenations $\mathbf s$ such that $\mathrm{seq}(\mathbf s) =\mathbf c$ is $\displaystyle{\frac{ (\sum_{i \ge 0} (c^0_i+c^1_i) -1)!} {\prod_{i \ge 0} c^0_i ! c^1_i !} }$. Indeed, let $\mathbf p$ be a concatenation of the $c_i^0$  copies of $L_{A_{i+1}\times A_{i}}$  and $c_i^1$  copies of $L_{A_{i}\times A_{i+1}}$ in some order, and consider the concatenation $\mathbf p^N$ for $N$ sufficiently large. Then we can find a unique  line with slope $s/r$ which intersects the path $\pi(\mathbf p^N)$ so that the path never goes above the line. Since $r$ and $s$ are relatively prime, two consecutive intersection points uniquely determine a concatenation which is a cyclic permutation of $\mathbf p$, and the number of cyclic permutations is  $\sum_{i \ge 0} (c^0_i+c^1_i)$. Now the claim  follows.

From Proposition \ref{KM} and the claim above, we obtain
\begin{align*} \sum_{\substack{\mathcal D: \, \text{\rm essential} \\ \mathrm{wt} (\mathcal D)=\alpha} } c(\mathcal D) &= \sum_{\mathcal D:\, \mathrm{wt} (\mathcal D)=\alpha}\  \sum_{\mathbf s \in \mathscr S (\mathcal D)} (-1)^{\sum_{i:\text{odd}}(c_i^0(\mathbf s)+c_i^1(\mathbf s))} \\ & = \sum_{\mathbf c \in \mathscr C(\alpha) }  (-1)^{\sum_{i:\text{odd}} (c_i^0+c_i^1)} \frac{ (\sum_{i \ge 0} (c^0_i+c^1_i) -1)!} {\prod_{i \ge 0} c^0_i ! c^1_i !} = \mathrm{mult} \, (\alpha).  \end{align*}

Now we consider arbitrary $r, s \in \mathbb Z_{\ge 0}$. We will show
\begin{equation} \label{eqn-th1}  \sum_{\mathbf c \in \mathscr C (\alpha)} (-1)^{\sum_{i:\text{odd}} (c_i^0+c_i^1)} \frac{ (\sum_{i \ge 0} (c^0_i+c^1_i) -1)!} {\prod_{i \ge 0} c^0_i ! c^1_i !} = \sum_{\tau | \alpha} \frac \tau \alpha \left [ \sum_{\substack{\mathcal D: \, \text{\rm essential} \\ \mathrm{wt} (\mathcal D)=\tau} } c(\mathcal D) +\sum_{\substack{\mathcal D: \, \text{\rm essential} \\ \mathrm{wt} (\mathcal D)=\tau/2} } \left \lfloor \tfrac { 1-c(\mathcal D)} 2 \right \rfloor \right ] .\end{equation}  Let $\mathbf c \in \mathscr C(\alpha)$. As before, assume that  $\mathbf p$ is a concatenation of the $c_i^0$  copies of $L_{A_{i+1}\times A_{i}}$  and $c_i^1$  copies of $L_{A_{i}\times A_{i+1}}$ in some order, and consider the concatenation $\mathbf p^N$ for $N$ sufficiently large. Then we can find a unique  line with slope $s/r$ which intersects the path $\pi(\mathbf p^N)$ so that the path never goes above the line. Then we obtain an equivalence class of concatenations of size $r \times s$.  We choose a concatenation from the equivalence class and denote it again by $\mathbf p$.

If $\mathbf p=\mathbf p_0^d$ for some concatenation $\mathbf p_0$ of weight $\tau$ such that $\alpha/\tau=d$ and $d$ is maximal, then the number of cyclic permutations of $\mathbf p$ is  $\sum_{i \ge 0} (c^0_i+c^1_i)/d$. Define the {\em contribution} of  the equivalence class of $\mathbf p$ to be  $(-1)^{\sum_{i:\text{odd}} (c_i^0+c_i^1)}/d$. Then the total sum of contributions of equivalence classes of concatenations $\mathbf p$ such that $\mathrm{seq}(\mathbf p)=\mathbf c$ is given by
 $(-1)^{\sum_{i:\text{odd}} (c_i^0+c_i^1)} \frac{ (\sum_{i \ge 0} (c^0_i+c^1_i) -1)!} {\prod_{i \ge 0} c^0_i ! c^1_i !}$. One can see this by observing that $\frac{ (\sum_{i \ge 0} c^0_i+c^1_i )!} {\prod_{i \ge 0} c^0_i ! c^1_i !}$ counts the number of concatenations and that $\frac{ (\sum_{i \ge 0} (c^0_i+c^1_i)- 1)!} {\prod_{i \ge 0} c^0_i ! c^1_i !}$ is the {\em weighted} number of cyclic equivalence classes of concatenations when we assign a weight $1/d$ to an equivalence class of $\sum_{i \ge 0} (c^0_i+c^1_i)/d$ members.

We group the equivalence classes of concatenations $\mathbf p$ according to the resulting equivalence classes $\mathcal D$ of Dyck paths so that $\pi(\mathbf p)\in \mathcal D$, and define $\mathcal T_{\mathcal D}$ to be  the total sum  of contributions of the equivalence classes of $\mathbf p$ such that $\pi(\mathbf p)\in \mathcal D$. Then we have
\begin{equation} \label{eqn-ee}
 \sum_{\mathbf c \in \mathscr C (\alpha)} (-1)^{\sum_{i:\text{odd}} (c_i^0+c_i^1)} \frac{ (\sum_{i \ge 0} (c^0_i+c^1_i) -1)!} {\prod_{i \ge 0} c^0_i ! c^1_i !} = \sum_{\mathcal D : \mathrm{wt} (\mathcal D) =\alpha} \mathcal T_{\mathcal D} . \end{equation}

We consider an equivalence class $\mathcal D$ of Dyck paths of weight $\alpha$ and choose a representative $D$. Let $D_0$ be an essential subpath of $D$  such that $D=\pi(D_0^{d})$, and let $\mathscr S(D_0)=\{ \mathbf s_1, \dots , \mathbf s_k \}$.  If $\mathbf p$ is a concatenation such that $\pi(\mathbf p)=D$ then $\mathbf p$ is equal to a concatenation of $d$ choices  of $\mathbf s_i$ from $\mathscr S(D_0)$ with repetition allowed. Thus the total sum $\mathcal T_{\mathcal D}$ of contributions  is equal to
\[\mathcal T_{\mathcal D}= \tfrac 1 d (\mathrm{sgn}(\mathbf s_1)+\cdots + \mathrm{sgn}(\mathbf s_k) )^d= \tfrac 1 d c(D_0)^d.\]
By Lemma \ref{lem-c}, we know that $c(D_0)=-1, 0$ or $1$. Unless $c(D_0)=-1$ and $d$ is even, we have $\mathcal T_{\mathcal D}= \frac 1 d c(D_0)$.
If $c(D_0)=-1$ and $d$ is even, then  we have $\mathcal T_{\mathcal D} = \frac 1 d= \frac 1 d c(D_0) + \frac 2 d$.

Now we obtain
\begin{align*}  \sum_{\mathcal D : \mathrm{wt} (\mathcal D) =\alpha} \mathcal T_D =&  \sum_{\tau | \alpha} \frac \tau \alpha  \sum_{\substack{\mathcal D_0: \, \text{\rm essential} \\ \mathrm{wt} (\mathcal D_0)=\tau} } c(\mathcal D_0) +\sum_{2\tau | \alpha} \frac{2 \tau}{\alpha} \sum_{\substack{\mathcal D_0: \, \text{\rm essential} \\ \mathrm{wt} (\mathcal D_0)=\tau} } \delta_{c(\mathcal D_0)+1,0} \\ =&\sum_{\tau | \alpha} \frac \tau \alpha \left [ \sum_{\substack{\mathcal D: \, \text{\rm essential} \\ \mathrm{wt} (\mathcal D)=\tau} } c(\mathcal D) +\sum_{\substack{\mathcal D: \, \text{\rm essential} \\ \mathrm{wt} (\mathcal D)=\tau/2} } \left \lfloor \tfrac { 1-c(\mathcal D)} 2 \right \rfloor \right ] ,
\end{align*}
where $\delta$ is the Kronecker delta. Combined with \eqref{eqn-ee}, this establishes the desired identity \eqref{eqn-th1}.

Finally, let $\beta=r_0 \alpha_1 + s_0 \alpha_2$ be such that  $r_0$ and $s_0$ are relatively prime and $\beta | \alpha$. Multiplying both sides of \eqref{eqn-th1} by $\alpha/\beta$, we obtain
\begin{equation} \frac \alpha \beta  \sum_{\mathbf c \in \mathscr C (\alpha)} (-1)^{\sum_{i:\text{odd}} (c_i^0+c_i^1)} \frac{ (\sum_{i \ge 0} (c^0_i+c^1_i) -1)!} {\prod_{i \ge 0} c^0_i ! c^1_i !} = \sum_{\tau | \alpha} \frac \tau \beta \left [ \sum_{\substack{\mathcal D: \, \text{\rm essential} \\ \mathrm{wt} (\mathcal D)=\tau} } c(\mathcal D) +\sum_{\substack{\mathcal D: \, \text{\rm essential} \\ \mathrm{wt} (\mathcal D)=\tau/2} } \left \lfloor \tfrac { 1-c(\mathcal D)} 2 \right \rfloor \right ] .\end{equation}
It follows from the M\"obius inversion and Proposition \ref{KM} that
\[ \mathrm{mult}\, (\alpha) = \sum_{\substack{\mathcal D: \, \text{\rm essential} \\ \mathrm{wt} (\mathcal D)=\alpha} } c(\mathcal D) +\sum_{\substack{\mathcal D: \, \text{\rm essential} \\ \mathrm{wt} (\mathcal D)=\alpha/2} } \left \lfloor \tfrac { 1-c(\mathcal D)} 2 \right \rfloor . \] This completes the proof.
\end{proof}

As a corollary,  we can prove an analogue of Frenkel's conjecture.

\begin{Cor} \label{Frenkel-bounds}
We have
$$\mathrm{mult}\, (\alpha)\ \leq\  p_t \left(1-\frac{(\alpha |\alpha)}{2}\right),$$
where $\alpha=r \alpha_1 + s\alpha_2$, $t=\max (r,s)$ and  $p_t(n)$  is the number of partitions of $n$ with at most $t$ parts.
\end{Cor}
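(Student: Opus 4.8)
The plan is to deduce the corollary from Theorem~\ref{thm-ff} after first replacing $\alpha$ by a well-chosen member of its Weyl orbit. Both $\mathrm{mult}\,(\alpha)$ and $(\alpha\mid\alpha)$ are $W$-invariant, and $p_t(n)$ is non-decreasing in $t$; moreover, if $\beta=r'\alpha_1+s'\alpha_2$ is not anti-dominant, say $\langle\beta,h_1\rangle=2r'-as'>0$, then $a\ge 3$ forces $r'>s'$ and $r_1\beta=(as'-r')\alpha_1+s'\alpha_2$ has $\max(as'-r',s')<r'=\max(r',s')$ (and symmetrically for $r_2$). Thus $\max(r',s')$ strictly decreases under a simple reflection as long as $\beta$ is not anti-dominant, so it is smallest over $W\alpha$ at the anti-dominant representative. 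Hence it is enough to prove the inequality for $\alpha=r\alpha_1+s\alpha_2$ with $2r\le as$ and $2s\le ar$; for such $\alpha$ the numbers $r$ and $s$ are comparable and $N:=1-(\alpha\mid\alpha)/2=1+ars-r^2-s^2$ has order $\min(r,s)^2$ rather than $O(1)$.

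Next I would turn the problem into a counting estimate. By Theorem~\ref{thm-ff}, using $c(\mathcal D)\le 1$ and $\lfloor(1-c(\mathcal D))/2\rfloor\le 1$,
\[ \mathrm{mult}\,(\alpha)\ \le\ \#\{\mathcal D\ \text{essential}:\ \mathrm{wt}(\mathcal D)=\alpha\}\ +\ \#\{\mathcal D\ \text{essential}:\ \mathrm{wt}(\mathcal D)=\alpha/2\}, \]
so it suffices to bound numbers of equivalence classes of Dyck paths by partition numbers. To a Dyck path $D$ of size $r\times s$ I attach the set $\rho(D)$ of unit cells of the $r\times s$ rectangle lying strictly below the main diagonal and weakly above $D$. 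This is a Young diagram that determines $D$ (a cell not below the diagonal lies above $D$ automatically); its first column (or first row, if $s>r$) is empty, so $\rho(D)$ has at most $\max(r,s)-1$ parts, each at most $\min(r,s)$, and $|\rho(D)|\le\tfrac12(r-1)(s-1)$ when $\gcd(r,s)=1$. Let $\lambda(D)$ be $\rho(D)$ with one further part of size $N-|\rho(D)|$ adjoined; this is a partition of $N$ with at most $\max(r,s)$ parts, and $D\mapsto\lambda(D)$ is injective provided the adjoined part is the largest, i.e. provided $N\ge|\rho(D)|+\min(r,s)$. The latter follows from the elementary inequality $1+ars-r^2-s^2\ge\tfrac12(r-1)(s-1)+\min(r,s)$, which holds for every anti-dominant imaginary root of $\mathcal H(a)$, $a\ge 3$, by a short quadratic computation using $a>\tfrac rs+\tfrac sr$. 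For $\gcd(r,s)=1$ this already yields $\mathrm{mult}\,(\alpha)\le p_t(1-(\alpha\mid\alpha)/2)$.

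The part I expect to cost the most work is the remaining bookkeeping: absorbing the weight-$\alpha/2$ term and reducing general $(r,s)$ to the coprime case via the M\"obius/divisor identity in the proof of Theorem~\ref{thm-ff}. The same encoding at weight $\alpha/2$ lands in partitions of the strictly smaller number $N_{\alpha/2}=1+\tfrac14(N-1)$ with at most $\tfrac12\max(r,s)$ parts, and proper divisors $\tau\mid\alpha$ contribute similarly shrunken counts; but the injection above is rigid --- all its target partitions have size exactly $N$ --- so to keep the full sum inside the one value $p_t(N)$ one must use that $\lambda(D)$ always has a dominant first part (the adjoined part is at least $N-\tfrac12(r-1)(s-1)>N/2$ for anti-dominant $\alpha$), so its image occupies only a sparse part of the target and there is room left for the divisor contributions. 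By contrast the Weyl reduction and the quadratic inequality are routine; the genuine content of the argument is simply that, after the Weyl reduction, the Dyck-path data of weight $\alpha$ is so small compared with $1-(\alpha\mid\alpha)/2$ that one crude size-$N$ injection into partitions with at most $\max(r,s)$ parts already suffices --- which is exactly why, as the introduction notes, this bound is crude.
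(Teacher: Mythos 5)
Your overall strategy --- Weyl-reduce to a (near-)anti-dominant representative, then inject Dyck paths into partitions of $N=1-(\alpha\mid\alpha)/2$ with at most $t$ parts by recording a row profile and padding with one dominant part --- is exactly the strategy of the paper's proof, and your Weyl reduction and the quadratic inequality $N\ge\tfrac12(r-1)(s-1)+\min(r,s)$ are both fine (the reduction is in fact justified more carefully than in the paper). But the injection itself has a genuine flaw: the region $\rho(D)$ of cells strictly below the diagonal and weakly above $D$ is \emph{not} a Young diagram. Writing $x_k$ for the number of horizontal steps of $D$ preceding its $k$-th vertical step, the number of cells of $\rho(D)$ in row $k$ is $x_k-\lceil rk/s\rceil$, and this sequence need not be weakly decreasing: for $r=s=5$ and the Dyck path $(0,0)\to(1,0)\to(1,1)\to(3,1)\to(3,3)\to(4,3)\to(4,4)\to(5,4)\to(5,5)$ the row counts are $(0,1,0,0,0)$. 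Hence $\lambda(D)$ is not a partition; and if you sort the parts to force it to be one, injectivity is lost (the path with $x=(2,2,3,4,5)$ has row counts $(1,0,0,0,0)$, the same multiset). The repair is to record the \emph{complementary} data, namely the cells lying below the path: in row $k$ these number $r-x_k$, which \emph{is} weakly decreasing in $k$, is at most $r-1$ in row $1$, and sums to at most the number of cells below the diagonal, so adjoining the part $\gamma_0=N-\sum_k(r-x_k)\ge r-1$ produces an honest partition of $N$ with at most $\max(r,s)$ parts, injectively in $D$. This is precisely the paper's encoding.

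The second soft spot is your treatment of the weight-$\alpha/2$ correction term and of non-coprime $(r,s)$, which you only sketch via a ``sparseness of the image'' heuristic; since every $\lambda(D)$ is a partition of exactly $N$, it is not clear as written that there is room inside $p_t(N)$ for the additional $\#\{\mathcal D:\mathrm{wt}(\mathcal D)=\alpha/2\}$ classes. The clean route is the injection used in Corollary~\ref{cor-bounds}: each essential class of weight $\alpha/2$ with $c=-1$ maps injectively into the essential classes of weight $\alpha$ with $c=-1$ (concatenate with a fixed class of contribution $+1$), which already yields $\mathrm{mult}(\alpha)\le\#\{\mathcal D\ \text{essential}:\ \mathrm{wt}(\mathcal D)=\alpha,\ c(\mathcal D)=1\}$; then only a single family of Dyck paths of size $r\times s$ needs to be injected into partitions of $N$, and no divisor bookkeeping is required. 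With these two repairs your argument coincides with the paper's proof.
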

\begin{proof}
Let $\alpha=r\alpha_1+s\alpha_2$. We assume by symmetry that $r\leq s$. Let $n=r-1+($the number of unit boxes below the diagonal$)$.
We define a one-to-one function from the set of Dyck paths to the set of partitions of $n$ by
$D\mapsto (\gamma_0,\gamma_1,...,\gamma_{s-1})$, where $\gamma_k=
($the number of unit boxes in the $k$-th row and below $D)$ for $k\in\{1,...,s-1\}$ and $\gamma_0=n-\sum_{k=1}^{s-1} \gamma_k$. It is straightforward to see that $\gamma_0\geq \gamma_1\geq \cdots\geq \gamma_{s-1}$. Hence we have $\mathrm{mult}\, (\alpha)\ \leq\  p_s(n)$, so it suffices to show that $n\leq 1-\frac{(\alpha |\alpha)}{2}$.

Since $\mathrm{mult}\, (\alpha)$ is invariant under the Weyl group action, we can further assume that $r\leq s \leq \frac{a}{2}r$. Then $$1-\frac{(\alpha |\alpha)}{2}=1+ars-r^2-s^2\geq 1+\frac{a^2-4}{2a}rs\geq 1+\frac{5}{6}rs\geq r-1+\frac{1}{2}rs\geq n.$$
\end{proof}

As another corollary, 
we obtain  combinatorial upper and  lower bounds for root multiplicities:

\begin{Cor} \label{cor-bounds}
We have
\[ \sum_{\substack{\mathcal D: \, \text{\rm essential} \\ \mathrm{wt} (\mathcal D)=\alpha} } c(\mathcal D) \,  \le \mathrm{mult}\, (\alpha) \, \le \# \{ \mathcal D :  \, \text{\rm essential}, \,  \mathrm{wt} (\mathcal D)=\alpha, \,  c(\mathcal D)=1 \} .\]
\end{Cor}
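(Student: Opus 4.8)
The plan is to derive both inequalities directly from Theorem~\ref{thm-ff} together with Lemma~\ref{lem-c}, using only the fact that each contribution multiplicity $c(\mathcal D)$ takes one of the three values $-1$, $0$, $1$. Recall that Theorem~\ref{thm-ff} expresses
\[ \mathrm{mult}\,(\alpha) = \sum_{\substack{\mathcal D: \, \text{\rm essential} \\ \mathrm{wt} (\mathcal D)=\alpha} } c(\mathcal D) +\sum_{\substack{\mathcal D: \, \text{\rm essential} \\ \mathrm{wt} (\mathcal D)=\alpha/2} } \left \lfloor \tfrac { 1-c(\mathcal D)} 2 \right \rfloor , \]
and that the floor term equals the number of essential classes $\mathcal D$ of weight $\alpha/2$ with $c(\mathcal D)=-1$, hence is a non-negative integer. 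So the first task is to observe that the correction term is $\ge 0$, which immediately gives the lower bound $\sum_{\mathrm{wt}(\mathcal D)=\alpha} c(\mathcal D) \le \mathrm{mult}\,(\alpha)$ after noting that each floor summand is $0$ or $1$ and that dropping it only decreases the right-hand side — more precisely, $\mathrm{mult}\,(\alpha)$ minus the first sum equals the correction term, which is $\ge 0$.

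For the upper bound I would first split the essential classes of weight $\alpha$ into three groups according to the value of $c(\mathcal D)$. The main sum is then
\[ \sum_{\substack{\mathcal D: \, \text{\rm essential} \\ \mathrm{wt} (\mathcal D)=\alpha} } c(\mathcal D) = N_1 - N_{-1}, \]
where $N_1$ (resp. $N_{-1}$) is the number of essential classes of weight $\alpha$ with $c(\mathcal D)=1$ (resp. $=-1$). Since $N_{-1}\ge 0$, we have $\sum c(\mathcal D) \le N_1$. The remaining point is to absorb the correction term: I would show that the number of essential classes $\mathcal D$ of weight $\alpha/2$ with $c(\mathcal D)=-1$ is itself bounded by $N_{-1}$, the number of essential classes of weight $\alpha$ with $c(\mathcal D)=-1$. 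The natural map here is $\mathcal D_0 \mapsto \pi(D_0^2)$, sending a class of weight $\alpha/2$ to a class of weight $\alpha$; using Lemma~\ref{lem-c} one checks that squaring a path doubles its count of framed subpaths of type $(-1)$ and cannot create framed subpaths of type $(0)$ where there were none, so a class with $c=-1$... wait — squaring doubles the count of type-$(-1)$ framed subpaths, making it even, so $c(\pi(D_0^2))=+1$, not $-1$. Thus the honest bound is $\mathrm{mult}\,(\alpha) = N_1 - N_{-1} + (\text{correction}) \le N_1 - N_{-1} + (\text{number of weight-}\alpha/2\text{ essential classes with } c=-1)$, and one wants the correction not to exceed $N_{-1}$; alternatively, and more simply, one argues that each correction-term class $\mathcal D_0$ of weight $\alpha/2$ is not essential of weight $\alpha$, so it contributes nothing to $N_1$ or $N_{-1}$, and instead one bounds $\mathrm{mult}\,(\alpha) \le N_1 - N_{-1} + (\text{correction})$ and then shows $\text{correction} \le N_{-1}$ by a separate injection, or — cleanest — one simply notes $N_1 - N_{-1} + \text{correction} \le N_1$ will fail in general, so the correct statement to prove is that the total right-hand side is at most $N_1$ via the inequality $-N_{-1} + \text{correction} \le 0$.

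Hence the key step, and the one I expect to be the main obstacle, is establishing $\text{correction} \le N_{-1}$: one must exhibit an injection from $\{\mathcal D_0 : \text{essential}, \mathrm{wt}(\mathcal D_0)=\alpha/2,\ c(\mathcal D_0)=-1\}$ into $\{\mathcal D : \text{essential}, \mathrm{wt}(\mathcal D)=\alpha,\ c(\mathcal D)=-1\}$. A natural candidate is to take such a $\mathcal D_0$ with representative $D_0$ and attach to one copy of $D_0$ a second copy in which one type-$(-1)$ framed subpath is replaced by a nearby type-$(1)$ or type-$(0)$ elementary path of the same size class — but matching sizes forces a delicate local surgery, and one must simultaneously guarantee the result stays essential and stays a Dyck path. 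If a clean injection is unavailable, the fallback is to prove the weaker but still-sufficient fact that every essential class of weight $\alpha/2$ can be paired with a \emph{distinct} essential class of weight $\alpha$ of the same contribution-sign-complexity via path concatenation with a fixed auxiliary essential path of weight $\alpha/2$ and $c=1$; the existence of such an auxiliary path (e.g. a single elementary path $L_{u\times v}$ of type $(1)$ fitting inside the diagonal strip, which exists since the $A_n$ grow geometrically) then yields the bound. Once the correction term is controlled, combining with $\sum c(\mathcal D) \le N_1$ gives $\mathrm{mult}\,(\alpha)\le N_1 = \#\{\mathcal D : \text{essential},\ \mathrm{wt}(\mathcal D)=\alpha,\ c(\mathcal D)=1\}$, completing the proof.
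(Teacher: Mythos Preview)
Your approach ultimately matches the paper's: the lower bound follows from non-negativity of the correction term in Theorem~\ref{thm-ff}, and for the upper bound you correctly reduce to the inequality $(\text{correction})\le N_{-1}$ and propose to prove it via an injection $\mathcal D_0 \mapsto \pi(D_0D_1)$ for a fixed essential $D_1$ of weight $\alpha/2$ with $c(D_1)=1$. This is exactly the paper's construction (after discarding your first attempt $\pi(D_0^2)$, which you rightly note fails---and in fact doubly so, since $\pi(D_0^2)$ is not even essential).

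The one genuine gap is your justification for the \emph{existence} of the auxiliary $D_1$. Your candidate, a single elementary path $L_{u\times v}$ of type $(1)$ with $u\alpha_1+v\alpha_2=\alpha/2$, need not be available: if $A_{2n}\le \min(u,v)\le\max(u,v)<A_{2n+1}$ for some $n\ge 1$ (for instance $u=v=A_2=a+1$), then $L_{u\times v}$ is of type $(-1)$, not type $(1)$, so the geometric growth of the $A_n$ does not by itself guarantee what you need. The paper sidesteps any explicit construction and instead argues that, since $\mathrm{mult}(\alpha/2)\ge 0$, the presence of an essential $D_0$ of weight $\alpha/2$ with $c(D_0)=-1$ forces some essential $D_1$ of the same weight with $c(D_1)=+1$ to exist. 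Once $D_1$ is fixed, the paper also records (and you should too) that $D:=\pi(D_0D_1)$ is essential with $c(D)=c(D_0)c(D_1)=-1$, and that $D_0\mapsto D$ is injective for fixed $D_1$.
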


\begin{proof}
The inequality for the  lower bound is clear. For the upper bound, we need only to prove that
\[ \# \{ \mathcal D :  \, \text{\rm essential}, \,  \mathrm{wt} (\mathcal D)=\alpha, \,  c(\mathcal D)=-1 \} \ge \# \{ \mathcal D_0 :  \, \text{\rm essential}, \,  \mathrm{wt} (\mathcal D_0)=\alpha / 2, \,  c(\mathcal D_0)=-1 \} .\] Suppose that $\mathcal D_0$ is essential with $\mathrm{wt}(\mathcal D_0)=\alpha/2$ and $c(\mathcal D_0)=-1$. We choose $D_0 \in \mathcal D_0$. Since  $\mathrm{mult} (\alpha/2) \ge 0$,  there exists an essential $D_1$  with $\mathrm{wt}(D_1)=\alpha/2$ and $c( D_1)=1$. Then  $D:=\pi(D_0D_1)$ is essential, and we have $\mathrm{wt}(D)=\alpha$ and $c(D)=-1$. If we fix $D_1$, then the map $D_0 \mapsto D$ is injective.
\end{proof}


\begin{Exa}

Consider $\alpha=4 \alpha_1+4\alpha_2 \in \mathcal H(3)$. Then we have the following representatives of equivalence classes of essential Dyck paths and the corresponding contribution multiplicities.  

\begin{center}
\begin{tabular}{cccccccc}
\begin{tikzpicture}[scale=0.4]
 \draw (0,0) grid (4,4);
\draw (0,0)--(4,4);
\draw [orange, line width=4] (0,0) -- (4,0)--(4,4);
\end{tikzpicture}
&
\begin{tikzpicture}[scale=0.4]
 \draw (0,0) grid (4,4);\draw (0,0)--(4,4);
\draw [orange, line width=4] (0,0) -- (3,0)--(3,1)--(4,1)--(4,4);
\end{tikzpicture}
&
\begin{tikzpicture}[scale=0.4]
 \draw (0,0) grid (4,4);\draw (0,0)--(4,4);
\draw [orange, line width=4] (0,0) -- (3,0)--(3,2)--(4,2)--(4,4);
\end{tikzpicture}
&
\begin{tikzpicture}[scale=0.4]
 \draw (0,0) grid (4,4);\draw (0,0)--(4,4);
\draw [orange, line width=4] (0,0) -- (3,0)--(3,3)--(4,3)--(4,4);
\end{tikzpicture}
&
\begin{tikzpicture}[scale=0.4]
 \draw (0,0) grid (4,4);\draw (0,0)--(4,4);
\draw [orange, line width=4] (0,0) -- (2,0)--(2,1)--(4,1)--(4,4);
\end{tikzpicture}
&
\begin{tikzpicture}[scale=0.4]
 \draw (0,0) grid (4,4);\draw (0,0)--(4,4);
\draw [orange, line width=4] (0,0) -- (2,0)--(2,1)--(3,1)--(3,2)--(4,2)--(4,4);
\end{tikzpicture}
&
\begin{tikzpicture}[scale=0.4]
 \draw (0,0) grid (4,4);\draw (0,0)--(4,4);
\draw [orange, line width=4] (0,0) -- (2,0)--(2,1)--(3,1)--(3,3)--(4,3)--(4,4);
\end{tikzpicture}
&
\begin{tikzpicture}[scale=0.4]
 \draw (0,0) grid (4,4);\draw (0,0)--(4,4);
\draw [orange, line width=4] (0,0) -- (2,0)--(2,2)--(3,2)--(3,3)--(4,3)--(4,4);
\end{tikzpicture}
\\
\hline
-1 & 1 & 1 & 1 & 1 & 1 & 1 & 1\\
\hline
\end{tabular}
\end{center}
Since $A_2=4$ for $\mathcal H(3)$, the weight $\alpha/2=2\alpha_1+2\alpha_2$ does not have any path $D$ with $c(D)=-1$, and the correction term is $0$. Thus we have $\mathrm{mult}\,(\alpha)=6$.

\end{Exa} 

\begin{Exa} \label{exa-55}

Consider $\alpha= 5 \alpha_1+5\alpha_2 \in \mathcal H(3)$. Then we have the following representatives of equivalence classes of essential Dyck paths and the corresponding contribution multiplicities.  


\begin{center}

\begin{tabular}{ccccc}
\begin{tikzpicture}[scale=0.4]
 \draw (0,0) grid (5,5);
\draw (0,0)--(5,5);
\draw [orange, line width=4] (0,0) -- (5,0)--(5,5);
\end{tikzpicture}
&
\begin{tikzpicture}[scale=0.4]
 \draw (0,0) grid (5,5);\draw (0,0)--(5,5);
\draw [orange, line width=4] (0,0) -- (4,0)--(4,1)--(5,1)--(5,5);
\end{tikzpicture}
&
\begin{tikzpicture}[scale=0.4]
 \draw (0,0) grid (5,5);\draw (0,0)--(5,5);
\draw [orange, line width=4] (0,0) -- (4,0)--(4,2)--(5,2)--(5,5);
\end{tikzpicture}
&
\begin{tikzpicture}[scale=0.4]
 \draw (0,0) grid (5,5);\draw (0,0)--(5,5);
\draw [orange, line width=4] (0,0) -- (4,0)--(4,3)--(5,3)--(5,5);
\end{tikzpicture}
&
\begin{tikzpicture}[scale=0.4]
 \draw (0,0) grid (5,5);\draw (0,0)--(5,5);
\draw [orange, line width=4] (0,0) -- (4,0)--(4,4)--(5,4)--(5,5);
\end{tikzpicture}
\\
\hline
-1 & 0 & 0 & 0 & -1  \\
\hline
\end{tabular}

\begin{tabular}{ccccc}
\begin{tikzpicture}[scale=0.4]
 \draw (0,0) grid (5,5);\draw (0,0)--(5,5);
\draw [orange, line width=4] (0,0) -- (3,0)--(3,1)--(5,1)--(5,5);
\end{tikzpicture}
&
\begin{tikzpicture}[scale=0.4]
 \draw (0,0) grid (5,5);
\draw (0,0)--(5,5);
\draw [orange, line width=4] (0,0) -- (3,0)--(3,1)--(4,1)--(4,2)--(5,2)--(5,5);
\end{tikzpicture}
&
\begin{tikzpicture}[scale=0.4]
 \draw (0,0) grid (5,5);\draw (0,0)--(5,5);
\draw [orange, line width=4] (0,0) -- (3,0)--(3,1)--(4,1)--(4,3)--(5,3)--(5,5);
\end{tikzpicture}
&
\begin{tikzpicture}[scale=0.4]
 \draw (0,0) grid (5,5);\draw (0,0)--(5,5);
\draw [orange, line width=4] (0,0) -- (3,0)--(3,1)--(4,1)--(4,4)--(5,4)--(5,5);
\end{tikzpicture}
&
\begin{tikzpicture}[scale=0.4]
 \draw (0,0) grid (5,5);\draw (0,0)--(5,5);
\draw [orange, line width=4] (0,0) -- (3,0)--(3,2)--(5,2)--(5,5);
\end{tikzpicture}
\\
\hline
0 & 1 & 1 & 1 & 1 \\
\hline
\end{tabular}

\begin{tabular}{cccccc}

\begin{tikzpicture}[scale=0.4]
 \draw (0,0) grid (5,5);\draw (0,0)--(5,5);
\draw [orange, line width=4] (0,0) -- (3,0)--(3,2)--(4,2)--(4,3)--(5,3)--(5,5);
\end{tikzpicture}
&
\begin{tikzpicture}[scale=0.4]
 \draw (0,0) grid (5,5);\draw (0,0)--(5,5);
\draw [orange, line width=4] (0,0) -- (3,0)--(3,2)--(4,2)--(4,4)--(5,4)--(5,5);
\end{tikzpicture}
&
\begin{tikzpicture}[scale=0.4]
 \draw (0,0) grid (5,5);
\draw (0,0)--(5,5);
\draw [orange, line width=4] (0,0) -- (3,0)--(3,3)--(5,3)--(5,5);
\end{tikzpicture}
&
\begin{tikzpicture}[scale=0.4]
 \draw (0,0) grid (5,5);\draw (0,0)--(5,5);
\draw [orange, line width=4] (0,0) -- (3,0)--(3,3)--(4,3)--(4,4)--(5,4)--(5,5);
\end{tikzpicture}
&
\begin{tikzpicture}[scale=0.4]
 \draw (0,0) grid (5,5);\draw (0,0)--(5,5);
\draw [orange, line width=4] (0,0) -- (2,0)--(2,1)--(5,1)--(5,5);
\end{tikzpicture}
\\
\hline
1 & 1 & 1 & 1 & 0  \\
\hline
\end{tabular}

\begin{tabular}{ccccc}
\begin{tikzpicture}[scale=0.4]
 \draw (0,0) grid (5,5);\draw (0,0)--(5,5);
\draw [orange, line width=4] (0,0) -- (2,0)--(2,1)--(4,1)--(4,2)--(5,2)--(5,5);
\end{tikzpicture}
&
\begin{tikzpicture}[scale=0.4]
 \draw (0,0) grid (5,5);\draw (0,0)--(5,5);
\draw [orange, line width=4] (0,0) -- (2,0)--(2,1)--(4,1)--(4,3)--(5,3)--(5,5);
\end{tikzpicture}
&
\begin{tikzpicture}[scale=0.4]
 \draw (0,0) grid (5,5);\draw (0,0)--(5,5);
\draw [orange, line width=4] (0,0) -- (2,0)--(2,1)--(4,1)--(4,4)--(5,4)--(5,5);
\end{tikzpicture}
&
\begin{tikzpicture}[scale=0.4]
 \draw (0,0) grid (5,5);
\draw (0,0)--(5,5);
\draw [orange, line width=4] (0,0) -- (2,0)--(2,1)--(3,1)--(3,2)--(5,2)--(5,5);
\end{tikzpicture}
&
\begin{tikzpicture}[scale=0.4]
 \draw (0,0) grid (5,5);\draw (0,0)--(5,5);
\draw [orange, line width=4] (0,0) -- (2,0)--(2,1)--(3,1)--(3,2)--(4,2)--(4,3)--(5,3)--(5,5);
\end{tikzpicture}
\\
\hline
1 & 1 & 1 & 1 & 1\\
\hline
\end{tabular}

\begin{tabular}{ccccc}
\begin{tikzpicture}[scale=0.4]
 \draw (0,0) grid (5,5);\draw (0,0)--(5,5);
\draw [orange, line width=4] (0,0) -- (2,0)--(2,1)--(3,1)--(3,2)--(4,2)--(4,4)--(5,4)--(5,5);
\end{tikzpicture}
&
\begin{tikzpicture}[scale=0.4]
 \draw (0,0) grid (5,5);\draw (0,0)--(5,5);
\draw [orange, line width=4] (0,0) -- (2,0)--(2,1)--(3,1)--(3,3)--(5,3)--(5,5);
\end{tikzpicture}
&
\begin{tikzpicture}[scale=0.4]
 \draw (0,0) grid (5,5);\draw (0,0)--(5,5);
\draw [orange, line width=4] (0,0) -- (2,0)--(2,1)--(3,1)--(3,3)--(4,3)--(4,4)--(5,4)--(5,5);
\end{tikzpicture}
&
\begin{tikzpicture}[scale=0.4]
 \draw (0,0) grid (5,5);\draw (0,0)--(5,5);
\draw [orange, line width=4] (0,0) -- (2,0)--(2,2)--(4,2)--(4,4)--(5,4)--(5,5);
\end{tikzpicture}
&
\begin{tikzpicture}[scale=0.4]
 \draw (0,0) grid (5,5);
\draw (0,0)--(5,5);
\draw [orange, line width=4] (0,0) -- (2,0)--(2,2)--(3,2)--(3,3)--(4,3)--(4,4)--(5,4)--(5,5);
\end{tikzpicture}
\\
\hline
1 & 1 & 1 & 1 & 1  \\
\hline
\end{tabular}
\end{center}
Since $\alpha/2$ is not an integral weight, the correction term is zero. Thus we have $\mathrm{mult}\,(\alpha)=16$.
\end{Exa}


\section{Sharper Upper Bound I}

The next goal of this paper is to obtain sharper upper bounds for root multiplicities by considering cancellation among paths with opposite contribution multiplicities. In this section, we will develop a procedure to obtain such bounds, which depends on a choice of a certain family of Dyck paths. In the next section, we will explicitly make a careful choice of such a family of Dyck paths.

We begin with a lemma, which guarantees the existence of  a family with desired properties. 

\begin{Lem} \label{lem-choice}
For each $L_{u \times v}$ of type $(-1)$, we can choose  an essential  Dyck path $M_{u\times v}$ of size $u\times v$ which contains no framed subpaths of type $(-1)$ or of type $(0)$.
\end{Lem}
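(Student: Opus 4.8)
The plan is to produce $M_{u\times v}$ as an explicit concatenation of elementary paths that are all of type $(1)$. Transposing a Dyck path — reversing its word and interchanging the letters $1$ and $2$ — turns a Dyck path of size $u\times v$ into one of size $v\times u$, carries $L_{p\times q}$ to $L_{q\times p}$ and hence preserves the type, preserves being essential, and preserves the decomposition into pieces between consecutive $21$-corners; so we may assume $u\le v$. Since $L_{u\times v}$ is of type $(-1)$, we have $A_n\le u\le v<A_{n+1}$ for some even $n>0$; in particular $u\ge A_n\ge A_2=a+1\ge 4$, while $v<A_{n+1}=aA_n-A_{n-1}+1\le aA_n\le au$, so in fact $u\le v<au$ and $v\ne au$.

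The next step is a reduction that makes the construction local. A Dyck path of size $u\times v$ with $u,v\ge 1$ necessarily begins with a horizontal move and ends with a vertical move, so the pieces between two consecutive $21$-corners (with the endpoints counted as $21$-corners) are exactly elementary paths $L_{p\times q}$ with $p,q\ge 1$; call these the atoms of the path. Every framed subpath is a concatenation of consecutive atoms, and a concatenation of two or more atoms has a word containing the factor $21$, hence is not elementary and carries no type. Consequently, if every atom of a Dyck path $D$ is of type $(1)$, then $D$ contains no framed subpath of type $(-1)$ or of type $(0)$. Thus it suffices to exhibit an essential Dyck path of size $u\times v$ all of whose atoms are of type $(1)$, and I will only use the atoms $L_{1\times q}$ with $1\le q\le a=A_2-1$, together with $L_{2\times 1}$ and $L_{1\times 2}$; each of these is of type $(1)$ with $m=1$, since $A_1=1\le\min$ and $\max<A_2$ in every case.

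For $u<v$: since $u\le v<au$, we can write $v=q_1+\cdots+q_u$ with each $q_j\in\{1,\dots,a\}$, and since $v\ne u$ and $v\ne au$ we can do so with the $q_j$ not all equal; reorder so that $q_1\le\cdots\le q_u$, and set $M_{u\times v}=\pi(L_{1\times q_1}L_{1\times q_2}\cdots L_{1\times q_u})$. Its atoms are precisely the $L_{1\times q_j}$, all of type $(1)$; it is a genuine Dyck path because after the first $i$ atoms it sits at $(i,\,q_1+\cdots+q_i)$ and the average of the $i$ smallest terms of a sorted sequence is at most the average of all of them, so $\frac{q_1+\cdots+q_i}{i}\le\frac{v}{u}$; and it is essential because a nonconstant nondecreasing sequence is not periodic, so the word $1\,2^{q_1}1\,2^{q_2}\cdots 1\,2^{q_u}$ is not a proper power. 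For $u=v$ (where $v=au$ cannot occur, as noted above), take $M_{u\times u}:=\pi(L_{2\times 1}\,L_{1\times 1}^{\,u-3}\,L_{1\times 2})$, which is defined since $u\ge 4$; its atoms $L_{2\times 1}$, $L_{1\times 1}$, $L_{1\times 2}$ are of type $(1)$, its word $1^2 2\,(1\,2)^{u-3}\,1\,2^2$ contains a unique factor $11$ and so is not a proper power, and the partial positions $(2,1)$, $(2+j,1+j)$, $(u-1,u-2)$, $(u,u-2)$, $(u,u)$ all lie weakly below the main diagonal.

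The part I expect to require the most care is the local reduction in the second paragraph — pinning down that the framed subpaths of a Dyck path that carry a well-defined type are exactly the atoms between consecutive $21$-corners, so that controlling the atoms genuinely suffices — and, connected with this, the observation that $v=au$ is impossible for a type $(-1)$ path, which is precisely what forces the separate (but trivial) treatment of $u=v$. The remaining ingredients — the type $(1)$ membership of $L_{1\times q}$, $L_{2\times 1}$, $L_{1\times 2}$, the averaging inequality giving the Dyck condition, and the non‑periodicity of a sorted non‑constant sequence giving essentiality — are routine.
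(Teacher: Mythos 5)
Your proof is correct and follows essentially the same strategy as the paper's: reduce the problem to producing a Dyck path of size $u\times v$ all of whose maximal elementary pieces (your ``atoms,'' the framed subpaths between consecutive $21$-corners) have sizes $1\times q$ with $q\le a<A_2$ or similarly small, so that every framed elementary subpath is of type $(1)$, and then arrange essentiality. The only real difference is how essentiality is secured: the paper starts from the near-diagonal path $E_{u\times v}$ and, when it touches the diagonal, swaps a vertical/horizontal pair at each contact point, whereas you take a sorted non-constant decomposition $v=q_1+\cdots+q_u$ (and an explicit word with a unique $11$ factor when $u=v$), which makes both the Dyck condition and the non-periodicity immediate; your version is somewhat more explicit, and your preliminary observation that only single atoms can carry a type is a useful clarification of a point the paper leaves implicit.
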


\begin{proof}
Suppose that $L_{u \times v}$ is of type $(-1)$. We may assume that $u\le v$.
Since \[ \frac {A_{2n+1}-1}{A_{2n}}= \frac {aA_{2n} -A_{2n-1}}{A_{2n}}=a- \frac {A_{2n-1}}{A_{2n}} <a, \] we have $1 \le v/ u <a$. Let $E_{u \times v}$ be the Dyck path that is closest to the diagonal joining $(0,0)$ and $(u, v)$. Then $E_{u \times v}$ is given by a concatenation of subpaths of sizes $1 \times s$ with $1 \le s \le a<A_2=a+1$. Therefore, if $E_{u \times v}$ is essential, we can put $M_{u \times v} = E_{u \times v}$. If $E_{u \times v}$ is not essential, then $E_{u \times v}$ meets with the diagonal other than $(0,0)$ and $(u, v)$. Each of these intersection points is incident with a vertical edge and a horizontal edge. Immediately after the horizontal edge, $E_{u \times v}$  travels $s$ steps in the north for some $s=1, 2, \dots, a-1$. We switch the order of the vertical edge and horizontal edge, so that the resulting Dyck path, say $M_{u \times v}$, does not touch the diagonal.  Also there are $s+1<A_2$ vertical edges after the new horizontal edge.  Hence $M_{u \times v}$ is essential and does not  contain any framed subpaths of type $(-1)$ or of type $(0)$.
\end{proof}

\begin{Rmk}
The choice of $M_{u \times v}$ made in the proof of Lemma \ref{lem-choice} is not optimal for upper bounds for root multiplicities.  In Section \ref{section-sharper}, we will investigate how to make an optimal choice of $M_{u \times v}$ to obtain sharp upper bounds for root multiplicities.
\end{Rmk}

For the rest of this section, we fix $M_{u \times v}$ for each $L_{u \times v}$ of type $(-1)$ that satisfies the conditions in Lemma \ref{lem-choice}. A subpath of the form $M_{u\times v}$ is to be called of type $(1c)$. Let $\Theta(\alpha)$ be the set of equivalence classes of essential Dyck paths with weight $\alpha = r \alpha_1+s\alpha_2$.
We define   a function \begin{equation} \label{eqn-Phi} \Phi: \{\mathcal D \in \Theta(\alpha):\,c(\mathcal D)=-1 \} \longrightarrow \{\mathcal D :\,c( \mathcal D)=1\}\end{equation} as follows: Choose a representative Dyck path $D \in \mathcal D$ , and we travel from $(0,0)$ to $(r,s)$ along $D$. As soon as we encounter with a subpath of type $(-1)$ or $(1c)$, we stop traveling and define $\Phi(\mathcal D)$ as the equivalence class containing the resulting Dyck path obtained from $D$ by replacing the subpath $L_{u\times v}$ or $M_{u\times v}$ with the corresponding $M_{u\times v}$ or $L_{u \times v}$, respectively. 

\begin{Rmk} \label{rmk-phi}
Note that $\Phi(\mathcal D)$ may not be essential. The definition of $\Phi$ depends on the choice of $M_{u \times v}$ and on the choice of representatives $D$. In general, $\Phi$ is not injective.
\end{Rmk}

Now we state the main theorem of this section.

\begin{Thm} \label{thm-phi}
Let $\alpha$ be a positive root of $\mathcal H (a)$. Then we have
\[   \mathrm{mult}\, (\alpha) \le \# \{ \mathcal D :   \, \text{\rm essential}, \,  \mathrm{wt} (\mathcal D)=\alpha, \,  c(\mathcal D)=1, \, \mathcal D \text{\rm \  is not an image under }\Phi \} .\]
\end{Thm}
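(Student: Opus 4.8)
The plan is to start from the combinatorial identity in Theorem~\ref{thm-ff} and show that the function $\Phi$ witnesses enough cancellation to discard all equivalence classes with $c(\mathcal D)=-1$ \emph{together with} an equal number of classes with $c(\mathcal D)=1$, namely the images of $\Phi$. Concretely, I would first reduce to the coprime case as in the proof of Theorem~\ref{thm-ff}: by M\"obius inversion it suffices to prove the bound when $\alpha=r\alpha_1+s\alpha_2$ with $\gcd(r,s)=1$, in which case each equivalence class has a unique essential representative, the correction term vanishes, and $\mathrm{mult}\,(\alpha)=\sum_{\mathcal D:\,\mathrm{wt}(\mathcal D)=\alpha}c(\mathcal D)$. (One still has to check that the reduction is compatible with the statement about images of $\Phi$; since $\Phi$ and its fibers for $d\tau$ are built from essential subpaths, the contributions of non-coprime weights can be organized exactly as in \eqref{eqn-th1}, and the correction-term classes—which have $c(\mathcal D)=-1$—are already accounted for on the $-1$ side.)

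In the coprime case, the heart of the argument is the following two claims about $\Phi$ restricted to $\{\mathcal D:\,\mathrm{wt}(\mathcal D)=\alpha,\ c(\mathcal D)=-1\}$. First, $\Phi$ lands inside $\{\mathcal D:\,\mathrm{wt}(\mathcal D)=\alpha,\ c(\mathcal D)=1\}$: replacing the first occurring subpath of type $(-1)$ by the corresponding $M_{u\times v}$ of type $(1c)$—which by Lemma~\ref{lem-choice} contains no framed subpath of type $(-1)$ or $(0)$—removes exactly one framed subpath of type $(-1)$ without creating new framed subpaths of type $(0)$ or $(-1)$, so by Lemma~\ref{lem-c} the contribution multiplicity flips sign from $-1$ to $1$; weight is obviously preserved, and essentiality of the image is not needed since the right-hand set in \eqref{eqn-Phi} does not demand it. (Here I must be careful: $\Phi$ as defined also acts on classes whose first special subpath is of type $(1c)$ rather than $(-1)$; but a class with $c(\mathcal D)=-1$ contains an \emph{odd} number of framed type-$(-1)$ subpaths and, traveling from the origin, the first \emph{framed} special subpath encountered is genuinely of type $(-1)$, so on the domain of interest $\Phi$ performs an $L\to M$ swap.) Second, and this is the crux, I would show that $\Phi$ is injective on $\{\mathcal D:\,c(\mathcal D)=-1\}$: given the image class, one travels along it until the first framed subpath of type $(-1)$ or $(1c)$; because $M_{u\times v}$ contains no framed subpath of type $(-1)$, and because the portion of the path \emph{before} the swapped subpath is unchanged and contained no framed special subpath at all (that was the stopping rule), the first such subpath in $\Phi(\mathcal D)$ is precisely the inserted $M_{u\times v}$; swapping it back recovers $\mathcal D$ uniquely. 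Injectivity then gives
\[
\#\{\mathcal D:\,\mathrm{wt}(\mathcal D)=\alpha,\ c(\mathcal D)=-1\}\ \le\ \#\{\mathcal D:\,\mathrm{wt}(\mathcal D)=\alpha,\ c(\mathcal D)=1,\ \mathcal D\in\mathrm{im}\,\Phi\},
\]
and since $c(\mathcal D)\in\{-1,0,1\}$ for all $\mathcal D$,
\[
\mathrm{mult}\,(\alpha)\ =\!\!\sum_{\mathrm{wt}(\mathcal D)=\alpha}\!\! c(\mathcal D)\ \le\ \#\{\mathcal D:\,c(\mathcal D)=1\}-\#\{\mathcal D:\,c(\mathcal D)=-1\}\ \le\ \#\{\mathcal D:\,c(\mathcal D)=1,\ \mathcal D\notin\mathrm{im}\,\Phi\},
\]
which is the assertion.

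The main obstacle I anticipate is the bookkeeping around \emph{framed} versus arbitrary subpaths and around representatives: the definition of $\Phi$ fixes a representative $D\in\mathcal D$ and "travels from $(0,0)$," so I need to verify that the output equivalence class does not depend on that choice in a way that breaks injectivity, or else build the injectivity argument intrinsically on the cyclic class by using the first framed special subpath after a canonically chosen $21$-corner. A second delicate point is confirming that the swap $L_{u\times v}\leftrightarrow M_{u\times v}$ does not inadvertently merge with neighboring edges to create a framed subpath of type $(0)$ straddling the boundary of the replaced block; this is controlled by the fact that $M_{u\times v}$ and $L_{u\times v}$ have the same size $u\times v$ and that their endpoints are $21$-corners of the ambient path, so the combinatorial type of every other framed subpath is untouched—but it deserves an explicit check. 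Finally, stitching the coprime computation back to general $\alpha$ via M\"obius inversion requires re-deriving the analogue of \eqref{eqn-th1} with "$\mathcal D\notin\mathrm{im}\,\Phi$" inserted, which is routine but must be written out to be sure the correction term lands on the correct side of the inequality.
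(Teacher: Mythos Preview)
Your argument has two genuine gaps.

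First, the injectivity of $\Phi$ that you call ``the crux'' is false: Remark~\ref{rmk-phi} states explicitly that $\Phi$ is not injective in general, and your justification breaks down because a class with $c(\mathcal D)=-1$ can perfectly well have its \emph{first} encountered special subpath be of type $(1c)$ rather than $(-1)$ (odd parity of the total count says nothing about which comes first). Fortunately, injectivity is not what you need. The inequality you actually use at the end is $\#\{c=1,\ \mathcal D\in\mathrm{im}\,\Phi\}\le\#\{c=-1\}$, which is the trivial bound $|\mathrm{Im}\,\Phi|\le|\mathrm{dom}\,\Phi|$; your displayed inequality from injectivity points the wrong way for this and is irrelevant. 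So in the coprime case your conclusion survives, but for the opposite reason.

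Second, and more seriously, the reduction to the coprime case does not work. M\"obius inversion transports \emph{identities}, not inequalities, because the M\"obius function alternates in sign; knowing the bound for each $\tau\mid\alpha$ does not yield the bound for $\alpha$. The paper avoids this entirely by working directly with general $\alpha$ from Theorem~\ref{thm-ff}. Writing
\[
\mathrm{mult}(\alpha)=N_1+N_2-N_3-N_4+N_5
\]
with $N_1,\dots,N_5$ the obvious counts (essential $c=1$ outside $\mathrm{Im}\,\Phi$; essential $c=1$ inside $\mathrm{Im}\,\Phi$; essential $c=-1$ with $\Phi$-image essential; essential $c=-1$ with $\Phi$-image non-essential; the correction term), one has $N_2\le N_3$ trivially as above. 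The substantive step you are missing is $N_5\le N_4$: one exhibits an injection from $\{\mathcal D_0\in\Theta(\alpha/2):c(\mathcal D_0)=-1\}$ into $\{\mathcal D\in\Theta(\alpha):c(\mathcal D)=-1,\ \Phi(\mathcal D)\text{ non-essential}\}$ by sending $D_0\mapsto \pi(D_0\,\Phi(D_0))$, which is essential, has $c=-1$, and has $\Phi$-image $\pi(\Phi(D_0)\,\Phi(D_0))$ non-essential by construction. This is the idea your hand-wave about the correction term ``already accounted for on the $-1$ side'' would have to become.
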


\begin{proof}
 
We set  \[ \begin{array}{l} N_1= \# \{ \mathcal D \in \Theta(\alpha) : \,  c(\mathcal D)=1, \, \mathcal D \notin \text{\rm Im}\, \Phi \}, \\
N_2= \# \{ \mathcal D \in \Theta(\alpha): \,  c(\mathcal D)=1, \, \mathcal D \in \text{\rm Im}\, \Phi \}, \\ N_3= \# \{ \mathcal D \in \Theta(\alpha): \,  c(\mathcal D)=-1, \, \Phi(\mathcal D) \text{\rm \  is essential} \}, \\
 N_4= \# \{ \mathcal D \in \Theta(\alpha): \,  c(\mathcal D)=-1, \, \Phi(\mathcal D) \text{\rm \  is non-essential} \}, \\ N_5= \# \{ \mathcal D \in \Theta(\alpha/2): \,  c(\mathcal D)=-1  \}. \end{array} \]
Then the identity in Theorem \ref{thm-ff} can be written as
\[ \mathrm{mult} \, (\alpha) = N_1+N_2-N_3-N_4+N_5 . \]
Note that we are proving $\mathrm{mult} \, (\alpha) \le N_1$.
Clearly, $N_2 -N_3 \le 0$, and we have only to show that $-N_4+N_5\le 0$.

Suppose that $\mathcal D_0$ is essential with $\mathrm{wt}(\mathcal D_0)=\alpha/2$ and $c(\mathcal D_0)=-1$.  By  Lemma \ref{lem-c}, $D_0$ has a framed subpath $L_{u \times v}$ of type $(-1)$. Thus $D_0 \neq \Phi(D_0)$. Set $D:=\pi (D_0\, \Phi(D_0))$. Then $D$ is essential, and we have $\mathrm{wt}(D)=\alpha$ and $c(D)=-1$. Moreover $\Phi(D)$ is non-essential by construction and the correspondence $D_0 \mapsto D$ induces an injective map from $\{ \mathcal D \in \Theta(\alpha/2): \,  c(\mathcal D)=-1  \}$ to $ \{ \mathcal D \in \Theta(\alpha): \,  c(\mathcal D)=-1, \, \Phi(\mathcal D) \text{\rm \  is non-essential} \}$. Thus we have $N_5 \le N_4$.
\end{proof}

\begin{Exa}
Consider again $\alpha= 5 \alpha_1+5\alpha_2 \in \mathcal H(3)$. We choose $M_{4 \times 4}$ and $M_{5 \times 5}$ to be
\[ \raisebox{-20 pt}{\begin{tikzpicture}[scale=0.4]
 \draw (0,0) grid (4,4);\draw (0,0)--(4,4);
\draw [orange, line width=4] (0,0) -- (3,0)--(3,1)--(4,1)--(4,4);
\end{tikzpicture}}
\quad \text{ and } \quad
\raisebox{-20 pt}{\begin{tikzpicture}[scale=0.4]
 \draw (0,0) grid (5,5);
\draw (0,0)--(5,5);
\draw [orange, line width=4] (0,0) -- (3,0)--(3,1)--(4,1)--(4,2)--(5,2)--(5,5);
\end{tikzpicture}}\ , \quad \text{respectively}.
\] Then the map $\Phi$ gives \begin{equation} \label{eqn-zp}
  \raisebox{-27 pt}{\begin{tikzpicture}[scale=0.4]
 \draw (0,0) grid (5,5);
\draw (0,0)--(5,5);
\draw [orange, line width=4] (0,0) -- (5,0)--(5,5);
\end{tikzpicture} } \ \longrightarrow \  \raisebox{-27 pt}{\begin{tikzpicture}[scale=0.4]
 \draw (0,0) grid (5,5);
\draw (0,0)--(5,5);
\draw [orange, line width=4] (0,0) -- (3,0)--(3,1)--(4,1)--(4,2)--(5,2)--(5,5);
\end{tikzpicture}} \quad \text{ and } \quad \raisebox{-27 pt}{\begin{tikzpicture}[scale=0.4]
 \draw (0,0) grid (5,5);\draw (0,0)--(5,5);
\draw [orange, line width=4] (0,0) -- (4,0)--(4,4)--(5,4)--(5,5);
\end{tikzpicture}} 
 \ \longrightarrow \ 
\raisebox{-27 pt}{
\begin{tikzpicture}[scale=0.4]
 \draw (0,0) grid (5,5);\draw (0,0)--(5,5);
\draw [orange, line width=4] (0,0) -- (3,0)--(3,1)--(4,1)--(4,4)--(5,4)--(5,5);
\end{tikzpicture}}.
\end{equation} In the first correspondence, the whole path is $L_{5 \times 5}$ and it is simply replaced by $M_{5 \times 5}$; in the second, the subpath $L_{4 \times 4}$ is replaced by $M_{4 \times 4}$. There are $18$ essential equivalence classes of paths with contribution multiplicity $1$ as one can see from Example \ref{exa-55}. Since two of them are in the image of $\Phi$ as shown in \eqref{eqn-zp}, we actually obtain an equality $16=\mathrm{mult}\, (\alpha) = \# \{ \mathcal D \in \Theta (\alpha) :   \,   c(\mathcal D)=1, \, \mathcal D \notin \mathrm{Im} \, \Phi \}$.

\end{Exa}

\section{Sharper Upper Bound II}\label{section-sharper}

The upper bound in Theorem \ref{thm-phi} depends on the choice of Dyck paths $M_{u \times v}$ and the resulting function $\Phi$. In this section, we will make an optimal choice of $M_{u \times v}$ so that $\Phi$ may become close to an injection and consequently produce sharp upper bounds for root multiplicities. 

Recall that we have defined the sequences $\{ A_n \}$, $\{ B_n \}$ by
\[ \begin{array}{ll}
A_0=0, \quad A_1=1, \quad A_{n+2} = a A_{n+1} - A_n+1  \quad \text{ for } n \ge 0, \\
B_0=0, \quad B_1=1, \quad B_{n+2} = a B_{n+1} - B_n   \quad \text{ for } n \ge 0. \end{array} \]

\begin{Lem}\label{ratio}
We have, for $i=1, 2, \dots$, \[  A_{i+1}A_{i-1}=A_{i}^2 - A_i . \]
\end{Lem}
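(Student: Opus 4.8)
The statement is a Cassini‑type identity for the inhomogeneous recurrence $A_{n+2}=aA_{n+1}-A_n+1$, and the natural route is induction on $i$. For the base case $i=1$ the left side is $A_2A_0=0$ since $A_0=0$, and the right side is $A_1^2-A_1=1-1=0$, so the identity holds.

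For the inductive step I would assume $A_{i+1}A_{i-1}=A_i^2-A_i$ and aim to prove $A_{i+2}A_i=A_{i+1}^2-A_{i+1}$. Expanding via the recurrence,
\[
A_{i+2}A_i=(aA_{i+1}-A_i+1)A_i=aA_{i+1}A_i-(A_i^2-A_i).
\]
Now substitute the inductive hypothesis $A_i^2-A_i=A_{i+1}A_{i-1}$ to get $A_{i+2}A_i=aA_{i+1}A_i-A_{i+1}A_{i-1}=A_{i+1}(aA_i-A_{i-1})$. Finally apply the recurrence once more, in the rearranged form $aA_i-A_{i-1}=A_{i+1}-1$, which yields $A_{i+2}A_i=A_{i+1}(A_{i+1}-1)=A_{i+1}^2-A_{i+1}$, completing the induction.

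There is essentially no obstacle here: the whole argument is a two‑line manipulation of the defining recurrence, and the only point requiring a moment of care is making sure the index shifts line up (so that $i-1\ge 0$ is available, which forces starting the induction at $i=1$). As an alternative one could instead invoke the closed form $(a-2)A_n+1=B_{n+1}-B_n$ — obtained by checking that $(a-2)A_n+1$ satisfies the homogeneous recurrence $C_{n+2}=aC_{n+1}-C_n$ with $C_0=1$, $C_1=a-1$ — and then reduce the claim to a Cassini identity for $\{B_n\}$; but the direct induction above is shorter and self‑contained, so that is the approach I would present.
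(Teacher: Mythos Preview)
Your proof is correct and is essentially the same as the paper's: both use induction with the base case $i=1$ and carry out the identical algebraic manipulation of the recurrence (the paper just shifts the index, assuming the identity at $i-1$ and proving it at $i$, whereas you assume it at $i$ and prove it at $i+1$).
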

\begin{proof}
We use induction. If $i=1$, then the assertion is clearly true. Assume that we have $A_{i}A_{i-2}=A_{i-1}^2 - A_{i-1}$. Then we obtain
$$\aligned
A_{i+1}A_{i-1} &= (aA_i-A_{i-1}+1)A_{i-1} = a A_iA_{i-1}-A_{i-1}^2 +A_{i-1}\\
&=aA_i A_{i-1} -A_iA_{i-2}= A_i (a A_{i-1}-A_{i-2})=A_i(A_i-1)=A_i^2-A_{i}.\endaligned$$
\end{proof}

For any positive integers $u,v$, denote by $E_{u\times v}$ the Dyck path of size $u\times v$ that is closest to the diagonal joining $(0,0)$ and $(u,v)$.
For any integer $n\geq 2$, we define
{\small $$\aligned
&M_{A_{2n}\times (A_{2n+1}-i)}\\
&=\left\{
\begin{array}{ll}
L_{A_{2n-1}\times (A_{2n}-2)}E_{(A_{2n}-2A_{2n-1})\times(A_{2n+1}-2A_{2n}+2)}L_{A_{2n-1}\times (A_{2n}-1)}, & \text{ for }i=1;\\
L_{A_{2n-1}\times (A_{2n}-3)}E_{(A_{2n}-2A_{2n-1})\times(A_{2n+1}-2A_{2n}+4-i)}L_{A_{2n-1}\times (A_{2n}-1)}, & \text{ for }2\leq i\leq a+2;\\
L_{(A_{2n-1}+1)\times j_1}E_{(A_{2n}-2A_{2n-1}-1)\times(A_{2n+1}-A_{2n}+1-j_1 -i )}L_{A_{2n-1}\times (A_{2n}-1)}, & \text{ for }a+3\leq i\leq \frac{(a+3)A_{2n}}{A_{2n-1}};\\
L_{(A_{2n-1}+1)\times j_1}E_{(A_{2n}-2A_{2n-1}-1)\times(A_{2n+1}-j_1-j_2 -i )}L_{A_{2n-1}\times j_2}, &\text{ for } \frac{(a+3)A_{2n}}{A_{2n-1}}<i \le A_{2n+1}-A_{2n},\\
\end{array}
\right.
\endaligned$$}
where $j_1$ is the integer satisfying
$$
\frac{j_1}{A_{2n-1}+1} \leq \frac{A_{2n+1}-i}{A_{2n}} < \frac{j_1+1}{A_{2n-1}+1}
$$
and $j_2$ is the integer satisfying
$$
\frac{j_2-1}{A_{2n-1}+1} < \frac{A_{2n+1}-i}{A_{2n}} \leq \frac{j_2}{A_{2n-1}+1}.
$$

For $1\leq i\leq A_{2n+1}-A_{2n}-\frac{A_{2n}}{A_{2n-1}+1}$, we define $M_{(A_{2n+1}-i)\times A_{2n}}$ to be the transpose of $M_{A_{2n}\times (A_{2n+1}-i)}$.
For $A_{2n+1}-A_{2n}-\frac{A_{2n}}{A_{2n-1}+1} <i<A_{2n+1}-A_{2n}$, define $M_{(A_{2n+1}-i)\times A_{2n}}$ by
$$
L_{j_2\times A_{2n-1}} E_{(A_{2n+1}-j_1-j_2 -i)\times (A_{2n}-2A_{2n-1}-2)} L_{j_1\times (A_{2n-1}+2)},
$$
where $j_2$ is the integer satisfying
$$
\frac{A_{2n-1}}{j_2} \leq \frac{A_{2n}}{A_{2n+1}-i} < \frac{A_{2n-1}}{j_2-1}
$$
and $j_1$ is the integer satisfying
$$
\frac{A_{2n-1}+2}{j_1+1} < \frac{A_{2n}}{A_{2n+1}-i} \leq \frac{A_{2n-1}+2}{j_1}.
$$

%

\begin{Lem}\label{still_Dyck}
The paths $M_{A_{2n}\times (A_{2n+1}-i)}$ and $M_{(A_{2n+1}-i)\times A_{2n}}$ are Dyck paths for each $n \ge 2$ and $1 \le i \le A_{2n+1}-A_{2n}$.
\end{Lem}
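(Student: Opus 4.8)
The plan is to verify, case by case according to the four-part definition of $M_{A_{2n}\times(A_{2n+1}-i)}$, that each prescribed concatenation produces a genuine lattice path from $(0,0)$ to the correct endpoint which never rises above the diagonal. A Dyck path of size $p\times q$ is a concatenation of elementary-type pieces staying weakly below the line $y=(q/p)x$; since each of our building blocks $L_{u\times v}$ and $E_{u\times v}$ is itself a Dyck path of its own size, the only thing to check is (a) that the horizontal and vertical displacements of the three pieces add up to $(A_{2n},A_{2n+1}-i)$, and (b) that the intermediate corner points lie weakly below the diagonal of the whole rectangle. Point (a) is a routine arithmetic check in each case; point (b) is where the inequalities defining the ranges of $i$, and the inequalities defining $j_1,j_2$, are used.

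First I would dispose of the cases $i=1$ and $2\le i\le a+2$, where the outer pieces are $L_{A_{2n-1}\times(A_{2n}-c)}$ and $L_{A_{2n-1}\times(A_{2n}-1)}$ for small constants $c$, and the middle piece $E$ absorbs the remaining displacement; here I use Lemma \ref{ratio}, i.e. $A_{2n+1}A_{2n-1}=A_{2n}^2-A_{2n}$, to compare the ratio $A_{2n-1}/(A_{2n}-1)$ with the slope $(A_{2n+1}-i)/A_{2n}$ of the full rectangle, which is what guarantees the first block $L_{A_{2n-1}\times(A_{2n}-1)}$ (or its shorter variant) ends below the diagonal. Concretely, one shows the endpoint of the initial $L$-block sits below the diagonal iff a linear inequality in $i$ holds, and this inequality is exactly (or implied by) the stated range; the middle $E$-block then stays below because $E_{u\times v}$ is the Dyck path closest to its own diagonal and that diagonal has been arranged to lie below the global one; and the terminal $L$-block reaches $(A_{2n},A_{2n+1}-i)$ by construction. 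The transpose cases $M_{(A_{2n+1}-i)\times A_{2n}}$ for $1\le i\le A_{2n+1}-A_{2n}-\tfrac{A_{2n}}{A_{2n-1}+1}$ follow immediately, since transposing a Dyck path of size $p\times q$ about the antidiagonal (reversing the word and swapping $1\leftrightarrow2$) yields a Dyck path of size $q\times p$.

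Next I would treat the two genuinely new families: the third and fourth branches of $M_{A_{2n}\times(A_{2n+1}-i)}$ (where the first block becomes $L_{(A_{2n-1}+1)\times j_1}$ and the last block is $L_{A_{2n-1}\times(A_{2n}-1)}$ or $L_{A_{2n-1}\times j_2}$), together with the exceptional transpose formula for $A_{2n+1}-A_{2n}-\tfrac{A_{2n}}{A_{2n-1}+1}<i<A_{2n+1}-A_{2n}$. For the $j_1$-block, the defining inequality $\frac{j_1}{A_{2n-1}+1}\le\frac{A_{2n+1}-i}{A_{2n}}<\frac{j_1+1}{A_{2n-1}+1}$ says precisely that after $A_{2n-1}+1$ horizontal steps the path may climb exactly $j_1$ vertical steps and remain weakly below the global diagonal, and the dual inequality for $j_2$ says the corresponding statement for the terminal block read backwards. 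The middle $E$-block again stays below since its own diagonal is sandwiched between the two corner points and those lie below the global line. The only remaining obligations are: the horizontal/vertical displacements sum correctly (arithmetic, using the formulas for $j_1,j_2$ and the range of $i$), the middle block has nonnegative dimensions so that it is actually a path (this is where the upper end of each $i$-range is needed, e.g. $i\le A_{2n+1}-A_{2n}$ forces the total vertical displacement $\ge A_{2n}$, and the lower bounds keep the $E$-block's height nonnegative), and the concatenation points are lattice points (immediate, since $j_1,j_2$ are integers).

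The main obstacle I expect is branch-boundary bookkeeping: showing that at each of the three transition values of $i$ ($i=1$ vs $i\ge 2$; $i=a+2$ vs $i\ge a+3$; $i=\lfloor(a+3)A_{2n}/A_{2n-1}\rfloor$ vs larger $i$, and likewise for the transpose cutoff) the displacement arithmetic is consistent and the "never above the diagonal" inequality does not degrade — in particular that $j_1$ and $j_2$ as defined are nonnegative and that $A_{2n}-2A_{2n-1}$, $A_{2n}-2A_{2n-1}-1$, $A_{2n}-2A_{2n-1}-2$ are all $\ge 0$, which requires $A_{2n}\ge 2A_{2n-1}+2$; this last inequality follows from $A_{2n}=aA_{2n-1}-A_{2n-2}+1$ with $a\ge 3$ together with $A_{2n-1}\ge A_{2n-2}$ for $n\ge 2$, an easy induction on the $\{A_n\}$ recursion. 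Once these positivity facts and the handful of linear-in-$i$ diagonal inequalities are in place, the verification is a finite mechanical case check; I would present the $i=1$ case in full detail and indicate that the others are analogous.
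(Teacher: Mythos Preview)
Your plan is correct and is essentially the same approach the paper takes: verify, branch by branch, that the intermediate $21$-corners of the concatenation lie weakly below the global diagonal, reducing each slope comparison via Lemma~\ref{ratio}. The paper's own proof is in fact much terser than your outline---it writes out only the two inequalities $\frac{A_{2n+1}-i}{A_{2n}}\le\frac{A_{2n}-1}{A_{2n-1}}$ and $\frac{A_{2n+1}-1}{A_{2n}}\ge\frac{A_{2n}-2}{A_{2n-1}}$ for the $i=1$ branch, reduces each to a triviality using $A_{2n+1}A_{2n-1}=A_{2n}^2-A_{2n}$, and then declares ``the remaining cases can be checked in a similar way''---so your proposal, with its explicit handling of the $j_1,j_2$ branches, the transpose cases, the nonnegativity of the middle block, and the convexity argument for why $E_{u\times v}$ stays under the global diagonal once its endpoints do, is already more detailed than what the paper provides.
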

\begin{proof}
Since the other case is similar, we only consider $M_{A_{2n}\times (A_{2n+1}-i)}$.
First we need to show that $\frac{A_{2n+1}-i}{A_{2n}}  \leq     \frac{A_{2n}-1}{A_{2n-1}}$ which is equivalent to
$$
A_{2n+1}A_{2n-1}-iA_{2n-1} \leq A_{2n}^2-A_{2n}.
$$
By Lemma~\ref{ratio}, this becomes $iA_{2n-1}\geq 0$, which is obvious.

Next, we consider $\frac{A_{2n+1}-1}{A_{2n}}  \geq     \frac{A_{2n}-2}{A_{2n-1}}$
which is equivalent to 
$$
A_{2n+1}A_{2n-1}-A_{2n-1} \ge A_{2n}^2-2A_{2n}.
$$
Again by Lemma~\ref{ratio}, this becomes $A_{2n-1} \le A_{2n}$, which is clearly true. The remaining cases can be checked in a similar way.
\end{proof}

For $n\geq 2$ and $k\in\{1,2, \dots , A_{2n+1}-A_{2n}-1\}$, we define
{\small $$\aligned
&M_{(A_{2n}+k)\times (A_{2n+1}-i)}\\
&=\left\{
\begin{array}{ll}
L_{(A_{2n-1}+1)\times j_1}E_{(A_{2n}-2A_{2n-1}+k-1)\times(A_{2n+1}-A_{2n}+1-j_1 -i)}L_{A_{2n-1}\times (A_{2n}-1)}, & \text{ for }1\leq i\leq p;\\
L_{(A_{2n-1}+1)\times j_1}E_{(A_{2n}-2A_{2n-1}+k-1)\times(A_{2n+1}-j_1-j_2 -i)}L_{A_{2n-1}\times j_2}, &\text{ for } p<i\leq A_{2n+1}-A_{2n}-k,\\
\end{array}
\right.
\endaligned$$}
where $j_1$ is the integer satisfying
$$
\frac{j_1}{A_{2n-1}+1} \leq \frac{A_{2n+1}-i}{A_{2n}+k} < \frac{j_1+1}{A_{2n-1}+1},
$$
$j_2$ is the integer satisfying
$$
\frac{j_2-1}{A_{2n-1}+1} < \frac{A_{2n+1}-i}{A_{2n}+k} \leq \frac{j_2}{A_{2n-1}+1},
$$
and $p$ is the integer satisfying $$ 
\frac{(A_{2n}-3)-1}{A_{2n-1}+1} < \frac{A_{2n+1}-p}{A_{2n}+k} \leq \frac{(A_{2n}-3)}{A_{2n-1}+1}.
$$

For $1\leq i\leq A_{2n+1}-A_{2n}-k-\frac{A_{2n}+k}{A_{2n-1}+1}$, we define $M_{(A_{2n+1}-i)\times (A_{2n}+k)}$ to be the transpose of $M_{(A_{2n}+k)\times (A_{2n+1}-i)}$.
For $A_{2n+1}-A_{2n}-k-\frac{A_{2n}+k}{A_{2n-1}+1} <i<A_{2n+1}-A_{2n}-k$, define $M_{(A_{2n+1}-i)\times (A_{2n}+k)}$ by
$$
L_{j_2\times A_{2n-1}} E_{(A_{2n+1}-j_1-j_2 -i)\times (A_{2n}-2A_{2n-1} +k -2)} L_{j_1\times (A_{2n-1}+2)},
$$
where $j_2$ is the integer satisfying
$$
\frac{A_{2n-1}}{j_2} \leq \frac{A_{2n}+k}{A_{2n+1}-i} < \frac{A_{2n-1}}{j_2-1}
$$
and $j_1$ is the integer satisfying
$$
\frac{A_{2n-1}+2}{j_1+1} < \frac{A_{2n}+k}{A_{2n+1}-i} \leq \frac{A_{2n-1}+2}{j_1}.
$$

\begin{Lem}
The paths $M_{(A_{2n}+k)\times (A_{2n+1}-i)}$ and $M_{(A_{2n+1}-i)\times (A_{2n}+k)}$ are Dyck paths for each $n\ge2$, $1 \le k \le A_{2n+1}-A_{2n}-1$ and $1 \le i \le A_{2n+1}-A_{2n}-k$.
\end{Lem}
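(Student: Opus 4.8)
The plan is to mimic the proof of Lemma~\ref{still_Dyck} verbatim, since the definitions of $M_{(A_{2n}+k)\times (A_{2n+1}-i)}$ and $M_{(A_{2n+1}-i)\times (A_{2n}+k)}$ differ from those of $M_{A_{2n}\times (A_{2n+1}-i)}$ and $M_{(A_{2n+1}-i)\times A_{2n}}$ only by replacing the ``width'' $A_{2n}$ with $A_{2n}+k$ throughout. By the transpose construction it suffices to treat $M_{(A_{2n}+k)\times(A_{2n+1}-i)}$ for the two ranges of $i$, and then the one extra case $A_{2n+1}-A_{2n}-k-\frac{A_{2n}+k}{A_{2n-1}+1}<i<A_{2n+1}-A_{2n}-k$ where the path is not simply a transpose. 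For each displayed formula I would check two things: first, that each building block $L$, $E$ is itself a genuine lattice path with nonnegative side lengths and that the three blocks concatenate from $(0,0)$ to $(A_{2n}+k,A_{2n+1}-i)$ (i.e.\ the horizontal lengths sum to $A_{2n}+k$ and the vertical lengths sum to $A_{2n+1}-i$); second, that the concatenation never crosses above the main diagonal, which reduces to a finite list of slope inequalities comparing the slope of each $L_{u\times v}$ block, and of each $E_{u\times v}$ block, against the slopes of the partial diagonals at the corner points where the blocks meet.

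Concretely, for the first range $1\le i\le p$ the path is $L_{(A_{2n-1}+1)\times j_1}\,E_{(A_{2n}-2A_{2n-1}+k-1)\times(A_{2n+1}-A_{2n}+1-j_1-i)}\,L_{A_{2n-1}\times(A_{2n}-1)}$, so I must verify: the horizontal lengths $(A_{2n-1}+1)+(A_{2n}-2A_{2n-1}+k-1)+A_{2n-1}=A_{2n}+k$ (immediate) and the vertical lengths $j_1+(A_{2n+1}-A_{2n}+1-j_1-i)+(A_{2n}-1)=A_{2n+1}-i$ (immediate); that the middle block has nonnegative side lengths, which for $A_{2n}-2A_{2n-1}+k-1$ uses $A_{2n}\ge 2A_{2n-1}+1$ for $n\ge2$ (a consequence of the recursion, since $A_{2n}=aA_{2n-1}-A_{2n-2}+1\ge 3A_{2n-1}-A_{2n-2}+1$), and for $A_{2n+1}-A_{2n}+1-j_1-i$ follows from the upper defining inequality for $j_1$ together with $i\le p$; and the staying-below-diagonal condition, which amounts to showing the slope of the first $L$-block $\frac{j_1}{A_{2n-1}+1}$ is at most the target slope $\frac{A_{2n+1}-i}{A_{2n}+k}$ (this is exactly the lower inequality defining $j_1$), that the slope of the last $L$-block $\frac{A_{2n}-1}{A_{2n-1}}$ is at least the target slope (this is the analogue of the inequality $\frac{A_{2n+1}-i}{A_{2n}+k}\le\frac{A_{2n}-1}{A_{2n-1}}$, which via Lemma~\ref{ratio} reduces to $iA_{2n-1}\ge kA_{2n-1}\cdot(\text{something nonnegative})$-type arithmetic that I would make precise), and that the $E$-block, being the path closest to \emph{its own} diagonal, automatically stays below the line through the two endpoints it connects, so one only needs the two endpoints of the $E$-block to lie on or below the main diagonal. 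The second range $p<i\le A_{2n+1}-A_{2n}-k$ is handled identically with $A_{2n}-1$ replaced by $j_2$ in the last block; here the role of $p$ is precisely to guarantee that the $E$-block's vertical extent $A_{2n+1}-j_1-j_2-i$ is nonnegative, which one reads off from the defining inequalities for $j_1,j_2$ and the inequality defining $p$.

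For the genuinely new case $A_{2n+1}-A_{2n}-k-\frac{A_{2n}+k}{A_{2n-1}+1}<i<A_{2n+1}-A_{2n}-k$, the path is $L_{j_2\times A_{2n-1}}\,E_{(A_{2n+1}-j_1-j_2-i)\times(A_{2n}-2A_{2n-1}+k-2)}\,L_{j_1\times(A_{2n-1}+2)}$; I would run the same checklist: horizontal lengths $j_2+(A_{2n+1}-j_1-j_2-i)+j_1=A_{2n+1}-i$ and vertical lengths $A_{2n-1}+(A_{2n}-2A_{2n-1}+k-2)+(A_{2n-1}+2)=A_{2n}+k$ (both immediate), nonnegativity of $A_{2n}-2A_{2n-1}+k-2$ (for $n\ge2$, since $A_{2n}\ge 2A_{2n-1}+1$ as above, and actually $A_{2n}-2A_{2n-1}-2\ge0$ needs $n\ge2$ and $a\ge3$, which I would confirm by a direct estimate on $A_4=a^2-a+1$ versus $2a$), nonnegativity of $A_{2n+1}-j_1-j_2-i$ from the $i$-lower-bound and the $j_1,j_2$ inequalities, and the below-diagonal condition reducing to $\frac{A_{2n-1}}{j_2}\le\frac{A_{2n}+k}{A_{2n+1}-i}$ (the defining lower inequality for $j_2$) for the first block, $\frac{A_{2n-1}+2}{j_1}\ge\frac{A_{2n}+k}{A_{2n+1}-i}$ (the defining upper inequality for $j_1$) for the last block, and the endpoint condition for the middle $E$-block.

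I expect the main obstacle to be the endpoint verification for the $E$-blocks: one must show that the corner where the first $L$-block ends (equivalently where the $E$-block starts) and the corner where the $E$-block ends both sit weakly below the main diagonal of slope $\frac{A_{2n+1}-i}{A_{2n}+k}$, and these are not single defining inequalities but require combining the $j_1,j_2$ (and $p$) inequalities with Lemma~\ref{ratio}-type identities among the $A_n$; keeping the inequalities in the right direction across the floor/ceiling definitions of $j_1,j_2$ is where sign errors are easy. Everything else is bookkeeping: once the two endpoints of each $E$-block are below the diagonal, the fact that $E_{u\times v}$ is \emph{by definition} the Dyck path closest to its own diagonal forces the whole $E$-portion below the main diagonal, and the $L$-portions are controlled by their single slope inequalities. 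Hence the lemma follows exactly as Lemma~\ref{still_Dyck}.
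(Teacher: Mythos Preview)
Your proposal is essentially correct and follows the same approach that the paper intends: the paper's own proof consists of the single sentence ``The proof is even simpler than the proof of Lemma~\ref{still_Dyck}, so we will omit it,'' so your plan to rerun the checks from Lemma~\ref{still_Dyck} with $A_{2n}$ replaced by $A_{2n}+k$ is exactly what is being deferred to the reader. In fact your sketch is more detailed than anything in the paper; the key slope inequality $\frac{A_{2n+1}-i}{A_{2n}+k}\le\frac{A_{2n}-1}{A_{2n-1}}$ becomes, via Lemma~\ref{ratio}, the trivial $k(A_{2n}-1)+iA_{2n-1}\ge 0$, which explains why the authors deem this case ``simpler.''
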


\begin{proof}
The proof is even simpler than the proof of Lemma~\ref{still_Dyck}, so we will omit it.
\end{proof}

We recall the definition of mutation of Dyck paths, which is  developed by Lee-Schiffler \cite{LS}, Rupel \cite{R} and Lee-Li-Zelevinsky \cite{LLZ}.
	\begin{Def}
	Consider the bijective function $\phi:\{1^a2,1^{a-1}2,\cdots,12,2\}\longrightarrow\{1,12,\cdots,12^{a-1},12^a\}$ defined by 
$$\phi(1^a2)=1, \quad \phi(1^{a-1}2)=12, \quad  \dots \quad \phi(12)=12^{a-1}, \quad \phi(2)=12^{a}.$$	
	Suppose that a finite sequence $S$ is obtained by concatenating (copies of) $1^a2,1^{a-1}2,\cdots,12,2$. Let $\phi(S)$ be the sequence obtained from $S$ by replacing each subsequence $1^a2$ (resp. $1^{a-1}2$, $\cdots$, $2$) with $\phi(1^a2)$ (resp. $\phi(1^{a-1}2)$, $\cdots$, $\phi(2)$). We call $\phi(S)$	the {\em mutation} of $S$.  \end{Def}
 	
\begin{Lem}
Let $u$ and $v$ be positive integers with $0\leq av-u\leq v\leq u$.   Then $\phi(E_{u\times v})=E_{v\times (av-u)}$.
\end{Lem}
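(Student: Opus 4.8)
The plan is to prove $\phi(E_{u\times v})=E_{v\times (av-u)}$ by directly analyzing the structure of the closest-to-diagonal Dyck path $E_{u\times v}$ as a concatenation of the blocks $1^j 2$, and then computing the effect of $\phi$ block-by-block. First I would establish the combinatorial description of $E_{u\times v}$: under the hypothesis $0\le av-u\le v\le u$, one has $1\le u/v\le a$, so $E_{u\times v}$ has the property that between two consecutive vertical steps there are either $\lfloor u/v\rfloor$ or $\lceil u/v\rceil$ horizontal steps, and more precisely $E_{u\times v}$ is the unique Dyck path of that size all of whose ``runs'' (maximal horizontal segments read off as $1^j$ followed by a $2$) have length $j$ with $1\le j\le a$. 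The key quantitative fact I would record is that if $1^{j_1}2\,1^{j_2}2\cdots 1^{j_v}2$ is $E_{u\times v}$ (so $\sum j_t = u$), then the partial sums $\sum_{t\le m} j_t$ are exactly $\lceil mu/v\rceil$ for each $m$; this is the standard ``balanced word / mechanical sequence'' characterization of the path hugging the diagonal from below.

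Next I would compute $\phi(E_{u\times v})$. Since $\phi$ sends $1^j 2 \mapsto 1\,2^{a-j}$ for $1\le j\le a$, applying $\phi$ to $1^{j_1}2\cdots 1^{j_v}2$ yields $1\,2^{a-j_1}\,1\,2^{a-j_2}\cdots 1\,2^{a-j_v}$, which is a path of size $v\times\bigl(\sum_t (a-j_t)\bigr) = v\times(av-u)$ — so at least the endpoint is correct. It remains to show this resulting word equals $E_{v\times(av-u)}$, i.e. that it is the closest-to-diagonal Dyck path of that size. Writing the mutated word by runs (now maximal \emph{vertical} segments, since the roles of $1$ and $2$ are swapped relative to the ``$1^k2$'' normal form), the $m$-th horizontal step is followed by $a-j_m$ vertical steps, and the height after the $m$-th horizontal step is $\sum_{t\le m}(a-j_t) = am - \lceil mu/v\rceil$. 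I would then verify that $am-\lceil mu/v\rceil = \lfloor m(av-u)/v\rfloor$, which is an elementary identity about floors and ceilings (using $\lceil x\rceil = -\lfloor -x\rfloor$ and $am$ integral), and that these partial sums are precisely those characterizing $E_{v\times(av-u)}$ — and also that the path stays weakly below its diagonal, which follows because $E_{u\times v}$ stays weakly below its diagonal and the floor/ceiling identity transports this inequality. Care is needed at the boundary: I should check the degenerate cases $av-u=0$ (then $E_{v\times 0}$ is the all-horizontal path $1^v$, matching $\phi$ applied to $E_{u\times v}=E_{av\times v}=(1^a2)^v$) and $v=u$ (then $u/v=1$, every block is $12$, and $\phi$ gives $(12^{a-1})^v = E_{v\times (a-1)v}$, consistent).

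A slightly cleaner alternative, which I might present instead, is an induction on $u$ (or on $u+v$) using the recursive self-similar structure of $E_{u\times v}$: peeling off the first run $1^{j_1}2$ with $j_1=\lceil u/v\rceil$ reduces to $E_{(u-j_1)\times(v-1)}$, and one tracks how $\phi$ interacts with this peeling, invoking the inductive hypothesis on the smaller path. However, the bookkeeping of which inequalities among $u,v,av-u$ persist under peeling is a little delicate, so the direct ``mechanical sequence'' computation above is probably the safer route.

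The main obstacle I anticipate is purely the arithmetic of verifying that the mutated word is exactly the \emph{closest}-to-diagonal path of size $v\times(av-u)$ — that is, nailing the identity $\sum_{t\le m}(a-j_t)=\lfloor m(av-u)/v\rfloor$ from $\sum_{t\le m}j_t=\lceil mu/v\rceil$, together with the ``weakly below the diagonal'' check, uniformly across all four clauses in the definition of $M$ that this lemma is meant to feed into. None of this is deep, but one must be scrupulous about floor-versus-ceiling conventions and about the endpoint cases; getting the characterization of $E_{u\times v}$ stated in exactly the form needed (partial sums equal to $\lceil mu/v\rceil$, not $\lfloor mu/v\rfloor$) is where a sign/rounding error would most easily creep in.
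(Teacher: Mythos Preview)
Your argument is correct: the characterization of $E_{u\times v}$ via the partial sums $\sum_{t\le m}j_t=\lceil mu/v\rceil$, the block-by-block computation of $\phi$, and the floor/ceiling identity $am-\lceil mu/v\rceil=\lfloor m(av-u)/v\rfloor$ together give exactly what is needed, and your boundary checks ($av-u=0$, $u=v$) are handled properly. The only cosmetic point is that when you say ``the height after the $m$-th horizontal step'' you mean the height reached after the vertical run following that step; this is what your formula computes, so just phrase it unambiguously.

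As for comparison with the paper: the paper does not actually prove this lemma---it simply refers the reader to \cite[p.~68]{LLZ}. Your proposal therefore supplies strictly more than the paper does. The approach you outline (Christoffel-word/mechanical-sequence description of the path hugging the diagonal, then transport under the substitution $1^j2\mapsto 12^{a-j}$) is in fact the argument underlying the cited passage in Lee--Li--Zelevinsky, so you are not taking a genuinely different route; you are reconstructing the referenced proof. The alternative inductive peeling you mention would also work but, as you note, requires tracking the inequality $0\le a(v-1)-(u-j_1)\le v-1\le u-j_1$ through the induction, which is slightly fiddlier than the direct computation.
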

\begin{proof}
See  \cite[p.68]{LLZ}.
\end{proof}

Now, for each $(u,v)$ with $A_{2}\leq \min (u,v), \ \max(u,v) < A_{3}$, we choose  an essential  Dyck path $M_{u\times v}$ of size $u\times v$ such that $M_{u\times v}\neq E_{u\times v}$ except for $(u,v)=(A_2,A_3-1)$ and $(A_3-1,A_2)$ and which contains no framed subpaths of type $(-1)$ or of type $(0)$.
A subpath of the form $M_{u\times v}$ with $A_{2n}\leq \min (u,v), \ \max(u,v) < A_{2n+1}$ ($n \ge 1$) is said to be of type $(1s)$. More specifically, a subpath of the form $M_{u\times v}$ with $A_{2}\leq \min (u,v), \ \max(u,v) < A_{3}$ is said to be of type $(1s1)$. If $n\geq 2$, then a subpath of the form $M_{u\times v}$ with $A_{2n}\leq \min (u,v), \ \max(u,v) < A_{2n+1}$ is said to be of type $(1s2)$. Here we do not further specify the paths $M_{u \times v}$ of type $(1s1)$ since the conditions given above are enough to obtain the main result (Proposition \ref{prop-last}), whereas the paths $M_{u\times v}$ of type $(1s2)$ have been deliberately chosen in this section.

The following proposition shows that our choice of $M_{u \times v}$ made above for type $(1s2)$ is optimal in the sense that the resulting map $\Phi$ in \eqref{eqn-Phi} is as close to an injection as possible. Before stating the proposition, we define one more terminology: For a Dyck path $D$, a pair of two subpaths of $D$ is said to be \emph{disjoint} if the two subpaths share no edges.

\begin{Prop} \label{prop-last}
Suppose that $M_{u\times v}\neq E_{u\times v}$ for $A_{2}\leq \min (u,v), \ \max(u,v) < A_{3}$ except for $(u,v)=(A_2,A_3-1)$ and $(A_3-1,A_2)$. Consider a Dyck path $D$. Then
each subpath of type $(1s2)$ of $D$  is disjoint from all other subpaths of type $(1s)$.
\end{Prop}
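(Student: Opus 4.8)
The plan is to show that a subpath of type $(1s2)$, say $M_{u\times v}$ with $A_{2n}\le\min(u,v)$, $\max(u,v)<A_{2n+1}$ and $n\ge2$, cannot overlap (share at least one edge with) any subpath of type $(1s)$ occurring in the same Dyck path $D$, except trivially for the subpath itself. The crucial input is the defining formula for $M_{u\times v}$ of type $(1s2)$: in every case of the case-split it is a concatenation of the form $L_{a\times b}\,E_{c\times d}\,L_{e\times f}$, where the two flanking elementary pieces $L$ have horizontal (or vertical) length roughly $A_{2n-1}$ or $A_{2n-1}+1$, and the middle piece $E_{c\times d}$ is an elementary-block concatenation whose blocks $L_{1\times s}$ or $L_{s\times 1}$ satisfy $s<A_2$. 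So the "fine structure" of $M_{u\times v}$ --- the sizes of the maximal monotone runs and the positions of its $21$-corners --- is controlled at scale $A_{2n-1}$ and below, well inside the window $[A_{2n},A_{2n+1})$ only at the coarsest scale.

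First I would record the structural constraints that any subpath of type $(1s)$ must satisfy. A subpath of type $(1s1)$ is some $M_{u'\times v'}$ with $A_2\le\min(u',v')$, $\max(u',v')<A_3$, and by the hypothesis of the Proposition it is \emph{not} equal to $E_{u'\times v'}$ (away from the two exceptional corner sizes); in particular it contains a monotone run of length at least $2$ that "sticks out" from the diagonal, and its overall size is bounded by $A_3-1$. A subpath of type $(1s2)$ is an $M$-path built as above at some level $m\ge2$. Second, I would analyze what an overlap between two such subpaths $P_1,P_2$ of $D$ forces: since a subpath of a Dyck path is determined by its starting and ending lattice points, an overlap means the starting point of one lies strictly between the endpoints of the other along $D$. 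Then I would compare the two pieces on the common portion: the coarse shape of $M_{u\times v}$ at level $n\ge2$ (two long $L$-blocks at scale $A_{2n-1}$, a gentle $E$-middle) is incompatible with what the other $(1s)$-subpath demands on the shared edges, because a type-$(1s1)$ subpath is too small to reach across a level-$n$ block boundary when $n\ge2$, and a second type-$(1s2)$ subpath at level $m$ would, on the overlap, impose run-length data at scale $A_{2m-1}$ that cannot coincide with the scale-$A_{2n-1}$ data of the first unless $m=n$ and the two subpaths are in fact identical. I would exploit Lemma~\ref{ratio} ($A_{i+1}A_{i-1}=A_i^2-A_i$) repeatedly to rule out the boundary/near-miss cases in the case-split, exactly as in the proofs of Lemma~\ref{still_Dyck} and the lemma following it.

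Concretely, the bookkeeping I expect to carry out is: (i) list the at most four forms of $M_{u\times v}$ of type $(1s2)$ and extract, for each, the multiset of maximal monotone run lengths and the set of interior $21$-corners; (ii) show every such run length is either $\le A_2$ (inside the $E$-middle) or equals one of $A_{2n-1}-1,\,A_{2n}-1,\,A_{2n}-2,\,A_{2n}-3,\,j_1,\,j_2$ or similar, all of which are pinned down by the inequalities defining $j_1,j_2,p$; (iii) observe that a subpath of type $(1s)$ must itself be a concatenation of $M$-pieces whose corner structure is rigid, and that matching it against $M_{u\times v}$ on an overlap of length $\ge1$ edge forces agreement of the run containing that edge, which then propagates (because the two candidate decompositions must agree on a whole block) to force the subpaths to coincide or be disjoint.

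The main obstacle will be step (iii), namely making rigorous the "propagation" argument: an overlap of a single edge does not immediately force the two subpaths to coincide, so I must show that the local run-length and corner data determined by the first subpath, once it agrees with the second on one edge, cannot be extended in two genuinely different ways --- this is where the careful choice of $M_{u\times v}$ of type $(1s2)$ (the specific $E_{c\times d}$ middles with $s<A_2$, and the $j_1,j_2$ breakpoints) is used in an essential way, and where I anticipate needing several applications of Lemma~\ref{ratio} together with the numerical inequalities $1\le v/u<a$ from the proof of Lemma~\ref{lem-choice}. Once the single-edge overlap is upgraded to "the two subpaths share a full $L$- or $E$-block", the conclusion that they are disjoint or equal follows by comparing sizes against the windows $[A_{2m},A_{2m+1})$.
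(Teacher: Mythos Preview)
Your outline has the right starting point --- the $L\,E\,L$ shape of every $M_{u\times v}$ of type $(1s2)$ --- but the execution diverges from what actually makes the argument close, and two genuine ingredients are missing.

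For the $(1s2)$ vs.\ $(1s2)$ case, the paper does \emph{not} run a propagation argument on run lengths. Instead it observes that since both subpaths have the form $L_{u_1\times v_1}E_{u_2\times v_2}L_{u_3\times v_3}$ with all four $L$-block dimensions strictly larger than $2$, the only way two such subpaths can overlap nontrivially is that the final $L$-block of the first coincides with the initial $L$-block of the second, i.e.\ $(u_3,v_3)=(u_1',v_1')$. One then rules this out by a single numerical estimate: using the definition of $j_2$ and Lemma~\ref{ratio} one checks $j_2\le A_{2n}-4$, so the pair $(u_3,v_3)$ that can occur as a terminal block never matches a pair $(u_1',v_1')$ that can occur as an initial block. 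Your step~(iii) (``propagation from a single shared edge'') is both harder and unnecessary; the right move is to pin down the unique possible overlap configuration and kill it with the $j_2$-bound.

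For the $(1s2)$ vs.\ $(1s1)$ case, you never use the hypothesis $M_{u\times v}\neq E_{u\times v}$, and you omit the mutation map $\phi$ entirely. The paper's argument is: since the flanking $L$-blocks of a type-$(1s2)$ path have dimensions at least $A_3$, a type-$(1s1)$ subpath $M_{u\times v}$ (with $\max(u,v)<A_3$) overlapping it must sit entirely inside the middle $E_{u_2\times v_2}$; but a subpath of a closest-to-diagonal path $E$ is itself an $E$-path, forcing $M_{u\times v}=E_{u\times v}$, which the hypothesis forbids except for $(u,v)=(A_2,A_3-1)$ or $(A_3-1,A_2)$. Those two exceptional sizes are then eliminated by observing that $(\phi\circ\phi)(E_{u\times v})$ is undefined for them (since $aA_1-A_2<0$), whereas $(\phi\circ\phi)(E_{u_2\times v_2})$ is defined, so $E_{u\times v}$ cannot be a subpath of $E_{u_2\times v_2}$. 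Without this use of the hypothesis and the mutation $\phi$, the exceptional cases are a genuine gap in your plan.
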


\begin{proof}
First we show that any pair of two distinct subpaths of type $(1s2)$ is disjoint. Suppose that two distinct subpaths of type $(1s2)$ are not disjoint. Let  one of the two subpaths be $L_{u_1\times v_1}E_{u_2\times v_2}L_{u_3\times v_3}$ and the other $L_{u_1'\times v_1'}E_{u_2'\times v_2'}L_{u_3'\times v_3'}$. Without loss of generality, assume that the first subpath starts before the second one does.      Since $u_1,v_1,u_3,v_3,u_1',v_1',u_3',v_3'>2$,      the only possibility is that  $u_3=u_1'$ and $v_3=v_1'$. We will check that this never happens. By symmetry we further assume  that $u_3\leq v_3$.

Then, by definition of $j_2$, we get 
$$\aligned
j_2-1 
&< \frac{A_{2n-1}+1}{A_{2n}}\left(A_{2n+1}-\frac{(a+3)A_{2n}}{A_{2n-1}} \right) \\ 
&= \frac{A_{2n-1}A_{2n+1}}{A_{2n}} + \frac{A_{2n+1}}{A_{2n}} -(a+3) -\frac{a+3}{A_{2n-1}}\\
&\overset{Lemma~\ref{ratio}}{=} A_{2n}-1 + \frac{A_{2n+1}}{A_{2n}} -(a+3) -\frac{a+3}{A_{2n-1}}\\
&<A_{2n}-1 + a-(a+3) = A_{2n}-4, \\
\endaligned$$  
which implies that $j_2\leq A_{2n}-4$. So if $u_3=u_1'=A_{2n-1}$ then $v_3\neq v_1'$. For other cases, it is easy to see that if $u_3=u_1'$ then $v_3\neq v_1'$. 

Next we show that each subpath of type $(1s2)$  is disjoint from any subpath of type $(1s1)$.  Let $W$ be a subpath of type $(1s2)$ and $V$ a subpath of type $(1s1)$.    Note that $W$ is of the form $L_{u_1\times v_1}E_{u_2\times v_2}L_{u_3\times v_3}$ with $\min(u_1,v_1,u_3,v_3)\geq A_3$, and that $V$ is of the form     $M_{u\times v}$ with $A_{2}\leq \min (u,v), \ \max(u,v) < A_{3}$. So if $W$ and $V$ are not disjoint, then $M_{u\times v}$ should be a subpath of  $E_{u_2\times v_2}$. This happens only when $M_{u\times v}= E_{u\times v}$, in other words, $M_{u\times v}$ is closest to the diagonal. Hence $(u,v)=(A_2,A_3-1)$ or $(A_3-1,A_2)$.  Without loss of generality, let $(u,v)=(A_3-1,A_2)$.  However $(\phi\circ\phi)(E_{u\times v})$ is not defined, because $aA_2-(A_3-1)=A_1=1$ and $aA_1-A_2<0$. On the other hand, it is straightforward to check that $(\phi\circ\phi)(E_{u_2\times v_2})$ is well defined, which implies that  $E_{u\times v}$ cannot be a subpath of  $E_{u_2\times v_2}$. 
\end{proof}

\vskip 1 cm

\end{document}